\documentclass[10pt]{amsart}
\usepackage{times,amsmath,amsbsy,amssymb,amscd,mathrsfs}
\usepackage{booktabs,graphicx,subfigure,epstopdf,wrapfig,chemarrow}

\usepackage{algorithm2e} 
\usepackage{multicol,multirow}
\usepackage{mathtools}
\usepackage[usenames,dvipsnames,svgnames,table]{xcolor}
\usepackage[all]{xy}
\usepackage{wrapfig}
\usepackage{tcolorbox}
\usepackage{enumitem}

\usepackage{tikz,tikz-cd}
\usepackage[utf8]{inputenc}
\usepackage{pgfplots} 
\usepackage{pgfgantt}
\usepackage{pdflscape}
\pgfplotsset{compat=newest} 
\pgfplotsset{plot coordinates/math parser=false}
\newlength\fwidth

\definecolor{myBlue}{rgb}{0.0,0.0,0.55}
\usepackage[pdftex,colorlinks=true,citecolor=myBlue,linkcolor=myBlue]{hyperref}

\usepackage[hyperpageref]{backref}

\usepackage{comment,multicol,xspace}

  \newcounter{mnote}
  \let\oldmarginpar\marginpar
    \renewcommand\marginpar[1]{\-\oldmarginpar[\raggedleft\footnotesize #1]%
    {\raggedright\footnotesize #1}}

\newtheorem{theorem}{Theorem}[section]
\newtheorem{lemma}[theorem]{Lemma}
\newtheorem{corollary}[theorem]{Corollary}
\newtheorem{proposition}[theorem]{Proposition}

\newtheorem{example}[theorem]{Example}

\newtheorem{remark}[theorem]{Remark}

\newcommand{\dd}{\,{\rm d}}

\newcommand{\supp}{\operatorname{supp}}

\DeclareMathOperator*{\spa}{span}

\newcommand{\tr}{\operatorname{tr}}
\newcommand{\alt}{\operatorname{Alt}}

\newcommand{\vertiii}[1]{{\left\vert\kern-0.25ex\left\vert\kern-0.25ex\left\vert #1 
    \right\vert\kern-0.25ex\right\vert\kern-0.25ex\right\vert}}

\newcommand{\Oplus}{\ensuremath{\vcenter{\hbox{\scalebox{1.5}{$\oplus$}}}}}

\newcommand{\TetLattice}[2]{%
\begin{tikzpicture}[
  scale=.65,
  x={(1cm,0cm)},
  y={(0.38cm,0.24cm)},
  z={(0cm,1cm)}
]
\coordinate (v0) at (0,0,0);
\coordinate (v1) at (4,0,0);
\coordinate (v2) at (0,4,0);
\coordinate (v3) at (0,0,4);

\draw (v0)--(v1)--(v3)--cycle;
\draw[dashed] (v2)--(v0) (v2)--(v1) (v2)--(v3);

\foreach \a in {0,...,#1}{
  \pgfmathtruncatemacro{\Mb}{#1-\a}
  \foreach \b in {0,...,\Mb}{
    \pgfmathtruncatemacro{\d}{#1-\a-\b}
    \pgfmathtruncatemacro{\c}{0}
    \pgfmathtruncatemacro{\supp}
    {(\a>0)+(\b>0)+(\d>0)}
    \pgfmathtruncatemacro{\ell}{\supp-1}

    \coordinate (p) at
    ({4*\b/#1},0,{4*\d/#1});

    \ifnum\ell<#2
      \node[
        circle, fill=gray!75, draw=none,
        minimum size=4.0pt, inner sep=0pt
      ] at (p) {};
    \else
      \ifnum\ell=1
        \node[
          circle, fill=black, draw=none,
          minimum size=3.8pt, inner sep=0pt
        ] at (p) {};
      \else\ifnum\ell=2
        \node[
          rectangle, fill=black, draw=none,
          minimum size=4.5pt, inner sep=0pt
        ] at (p) {};
      \else
        \node[
          circle, fill=black, draw=none,
          minimum size=3.2pt, inner sep=0pt
        ] at (p) {};
      \fi\fi
    \fi
  }
}
\foreach \a in {1,...,#1}{
  \pgfmathtruncatemacro{\Mb}{#1-\a}
  \ifnum\Mb>0
    \foreach \b in {1,...,\Mb}{
      \pgfmathtruncatemacro{\Mc}{#1-\a-\b}
      \ifnum\Mc>0
        \foreach \c in {1,...,\Mc}{
          \pgfmathtruncatemacro{\d}{#1-\a-\b-\c}
          \ifnum\d>0
            \coordinate (p) at
            ({4*\b/#1},{4*\c/#1},{4*\d/#1});
\node[
  regular polygon,
  regular polygon sides=3,
  fill=black,
  draw=none,
  minimum size=8pt,
  inner sep=0pt
] at (p) {};
          \fi
        }
      \fi
    }
  \fi
}
\end{tikzpicture}%
}

\newcommand{\TetInteriorLattice}[1]{%
\begin{tikzpicture}[
  scale=.65,
  x={(1cm,0cm)},
  y={(0.38cm,0.24cm)},
  z={(0cm,1cm)}
]
\coordinate (v0) at (0,0,0);
\coordinate (v1) at (4,0,0);
\coordinate (v2) at (0,4,0);
\coordinate (v3) at (0,0,4);

\draw (v0)--(v1)--(v3)--cycle;
\draw[dashed] (v2)--(v0) (v2)--(v1) (v2)--(v3);

\foreach \a in {1,...,#1}{
  \pgfmathtruncatemacro{\Mb}{#1-\a}
  \ifnum\Mb>0
    \foreach \b in {1,...,\Mb}{
      \pgfmathtruncatemacro{\Mc}{#1-\a-\b}
      \ifnum\Mc>0
        \foreach \c in {1,...,\Mc}{
          \pgfmathtruncatemacro{\d}{#1-\a-\b-\c}
          \ifnum\d>0
            \coordinate (p) at
            ({4*\b/#1},{4*\c/#1},{4*\d/#1});
\node[
  regular polygon,
  regular polygon sides=3,
  fill=black,
  draw=none,
  minimum size=8pt,
  inner sep=0pt
] at (p) {};
          \fi
        }
      \fi
    }
  \fi
}

\end{tikzpicture}%
}
\ifpdf
  \DeclareGraphicsExtensions{.eps,.pdf,.png,.jpg}
\else
  \DeclareGraphicsExtensions{.eps}
\fi

\newcommand{\Needspace}[1]{\par\vspace{#1}}

\begin{document}
\title[Whitney--Lagrange Decomposition]{Whitney--Lagrange Decomposition for the First Family of Finite Element Differential Forms}
\author{Long Chen}%
 \address{Department of Mathematics, University of California at Irvine, Irvine, CA 92697, USA}%
 \email{chenlong@math.uci.edu}%
 \author{Xuehai Huang}%
 \address{Corresponding author. School of Mathematics, Shanghai University of Finance and Economics, Shanghai 200433, China}%
 \email{huang.xuehai@sufe.edu.cn}%

 \thanks{The first author was supported by NSF DMS-2309777 and DMS-2309785. The second author was supported by the National Natural Science Foundation of China (Grant No. 12671432).}

\makeatletter
\@namedef{subjclassname@2020}{\textup{2020} Mathematics Subject Classification}
\makeatother
\subjclass[2020]{
58A10;   
58J10;   
65N30;   
}

\begin{abstract}
We derive polynomial bubble duality in barycentric coordinates and construct explicit nodal bases for the first family of finite element differential forms. Logarithmic coordinates give an elementary proof of the Whitney--bubble decomposition. The Hessian of negative entropy defines a metric whose normalized Hodge star maps bubble generators to complementary-degree polynomial forms. This gives a direct geometric description of the known dualities and proves nondegeneracy of their integral pairings. Combining Whitney forms with shifted Lagrange polynomials then gives bases dual to tangential point evaluations in every dimension and order. The construction extends to scalar nodes that are unisolvent on every face. The interpolant is independent of the roots and commutes with face restriction.  
\end{abstract}
\keywords{finite element differential forms, Whitney forms, bubble duality, Lagrange interpolation, trimmed spaces}

\maketitle


\section{Introduction}

Finite element exterior calculus describes conforming discretizations of the de Rham complex through the full spaces $\mathbb P_r\Lambda^k$ and the trimmed spaces $\mathbb P_r^-\Lambda^k$. These include the classical Lagrange, N\'ed\'elec, Raviart--Thomas, and Brezzi--Douglas--Marini elements; see~\cite{arnoldFiniteElementExterior2018,ArnoldFalkWinthe2006Finite}. We use geometric decompositions to derive the bubble dualities in barycentric coordinates and to construct explicit nodal bases for the trimmed family.

Arnold, Falk, and Winther gave rooted Whitney--bubble bases and consistent face extensions~\cite[Proposition~6.5 and Theorems~7.1--7.3]{ArnoldFalkWinther2009}. Related scalar--Whitney representations, bases, and extensions are developed in~\cite{ChristiansenRapetti2016,Licht2022,Isaac2022}. We first give an elementary proof of this decomposition.

Let $T$ be a $d$-simplex with barycentric coordinates $\lambda_i$, and let $r\geq0$ and $0\leq k\leq d$. For an $\ell$-face $f$ with $k\leq\ell\leq d$, let $\Delta_k(f)$ be its set of $k$-faces and choose a root vertex $f(0)$. Write $b_e=\prod_{i\in e}\lambda_i$ for the bubble polynomial of a face $e$, and let $w_e$ be its Whitney form. The face-associated bubble space is
\[
\mathbb B_{r+1}^-\Lambda^k(T; f)=\bigoplus_{\substack{e\in\Delta_k(f)\\ f(0)\in e}}\mathbb P_{r+k-\ell}(f)\,b_{f\setminus e}\,w_e.
\]
Here polynomials on $f$ are extended to $T$ using their Bernstein representations. Polynomial spaces of negative degree are zero. The geometric decomposition is
\begin{equation}\label{intro:dec}
\mathbb P_{r+1}^-\Lambda^k(T)=\bigoplus_{\ell=k}^{d}\bigoplus_{f\in\Delta_\ell(T)}\mathbb B_{r+1}^-\Lambda^k(T;f).
\end{equation}

Our first contribution is a direct barycentric derivation of bubble duality. The logarithmic coordinates are the gradient of the negative entropy $\sum_i\lambda_i\log\lambda_i$. Its Hessian defines the metric
\[
M=\sum_{i=0}^d\frac{\dd\lambda_i\otimes\dd\lambda_i}{\lambda_i}.
\]
Write $\mathbb B_r\Lambda^k(T)$ and $\mathbb B_{r+1}^-\Lambda^k(T)$ for the full and trimmed spaces with zero boundary trace. For $0\leq k<d$, set $k^*=d-k$ and $s=r-k^*$. With $b_T=\prod_{i=0}^d\lambda_i$, the normalized Hodge star $b_T^{-1/2}\star_M$ gives
\[
\begin{aligned}
\mathbb B_{r+1}^-\Lambda^k(T)&\cong\mathbb P_s\Lambda^{k^*}(T),
&&s\geq0,\\
\mathbb B_r\Lambda^k(T)&\cong\mathbb P_s^-\Lambda^{k^*}(T),
&&s\geq1.
\end{aligned}
\]
The same map makes $\int_T\omega\wedge(b_T^{-1/2}\star_M\omega)$ positive for every nonzero bubble $\omega$. Thus both wedge pairings are nondegenerate and give integral degrees of freedom (Theorem~\ref{thm:bubble-duality}). These dualities are known from Berchenko--Kogan's sphere construction~\cite[Corollaries~3.3--3.4]{BerchenkoKogan2021}. Our derivation identifies their action on each geometric face block directly from the logarithmic Whitney identity and the metric, and provides a more direct alternative construction.

Our second contribution is an explicit basis for the trimmed spaces dual to tangential point evaluations. Set $m=r+k+1$ and assume that the scalar nodes are unisolvent for $\mathbb P_m$ on every face, including $T$. Let $x_\alpha$ lie in the relative interior of a face $f$ with $\dim f\geq k$, and let $\phi_\alpha$ be its scalar Lagrange basis function on $T$. Orient each rooted $k$-face as $e=[f(0),e(1),\ldots,e(k)]$, with the remaining vertices in increasing order, and define
\[
\Phi_{\alpha,e}^{f,T}=\left(\prod_{i\in e\setminus\{f(0)\}}\lambda_i(x_\alpha)\right)\frac{\phi_\alpha}{b_e}\,w_e.
\]
Scalar unisolvence on every face ensures that $b_e$ divides $\phi_\alpha$, so the quotient is a polynomial of degree at most $r$. The division means polynomial cancellation, including at boundary points. The dual functional is
\[
N_{\alpha,e}^f(\omega)=(\tr_f\omega)(x_\alpha)\bigl((\texttt v_{e(1)}-x_\alpha)\wedge\cdots\wedge(\texttt v_{e(k)}-x_\alpha)\bigr).
\]
Using all scalar nodes in the relative interiors of faces of dimension at least $k$, and all $k$-faces containing the chosen root, gives a basis of $\mathbb P_{r+1}^-\Lambda^k(T)$ dual to these evaluations. Theorem~\ref{thm:nodal-whitney-lagrange} proves the formula on the principal lattice; Theorem~\ref{thm:scalar-node-transfer} extends it to scalar nodes that are unisolvent on every face.
The interpolant is independent of the root and compatible with face restriction (Proposition~\ref{prop:root-trace}). Shared face data assemble into the standard conforming trimmed space. For $k=0$, the construction reduces to scalar Lagrange interpolation.

The construction parallels the tangential--normal decomposition for the full family in~\cite{ChenHuangtnform}: both combine scalar Lagrange functions with face geometry, using Whitney forms here and tangential--normal frames for the full family.

Graglia, Wilton, and Peterson constructed interpolatory vector bases from lowest-order vector functions and shifted Lagrange polynomials~\cite{GragliaWiltonPeterson1997}. For tetrahedral one-forms, our formula has the same scalar--Whitney product, up to normalization and the choice of independent components; see Remark~\ref{rem:product-comparison} and~\cite[Section~3]{HooghiemstraHeulVuik2010}. Hiptmair uses moment degrees of freedom and discusses point-value alternatives~\cite{Hiptmair2001}. The small-simplex integrals in~\cite{Rapetti2007,RapettiBossavit2009,AlonsoRodriguezRapetti2019,AlonsoCamanoDeLosSantosRapetti2024} define a different interpolation system. The present construction treats every dimension and form degree with the same basis formula and explicit dual point evaluations.

Point evaluation is defined here for forms with continuous coefficients but is generally not bounded on $H\Lambda^k$. The interpolant does not in general commute with the exterior derivative, and cardinality alone gives no stability bound as the order grows. Related stability questions for integral interpolation are studied in~\cite{AlonsoBruniRapetti2022,BruniBrunoPiazzon2025}. Our basis simplifies the imposition of tangential boundary data. Nonuniform scalar nodes are allowed, but conditioning also depends on the geometry and normalization.

Section~\ref{sec:diffform} recalls trimmed spaces and Whitney forms. Section~\ref{sec:geomdecompTrimPolyForm} proves the geometric decomposition. Section~\ref{sec:rooted-bubbles} derives the barycentric metric, its Hodge star, and the bubble dualities. Section~\ref{sec:nodal-whitney-lagrange} constructs the dual nodal bases, proves their interpolation properties, and gives examples.

\section{Trimmed polynomial forms and Whitney forms}
\label{sec:diffform}

Let $T=[\texttt v_0,\ldots,\texttt v_d]\subset\mathbb R^d$ be a
nondegenerate simplex with barycentric coordinates
$\lambda_0,\ldots,\lambda_d$. Let $\Delta_\ell(T)$ be its set of
$\ell$-faces, and $\Delta(T)$ its set of nonempty faces. We identify a
face with its vertex set, ordered increasingly unless stated otherwise.
For $f=[f(0),\ldots,f(\ell)]\in\Delta_\ell(T)$, set
\[
f^*=\{0,\ldots,d\}\setminus f,\qquad
b_f=\prod_{i\in f}\lambda_i,\qquad
\dd\lambda_f=\dd\lambda_{f(0)}\wedge\cdots\wedge\dd\lambda_{f(\ell)}.
\]
Use $b_\varnothing=1$ and $\dd\lambda_\varnothing=1$; a consecutive
sublist is written $f(i:j)$. The opposite facet is $F_i=\{i\}^*$.
Traces are pullbacks under face inclusions, and
$\dd_f\lambda_i=\operatorname{tr}_f\dd\lambda_i$.
Let $\mathscr T^f$ be the tangent space of $f$ and
$\operatorname{Alt}^k(\mathscr T^f)$ its space of alternating $k$-forms.
Polynomial spaces of negative degree and form spaces outside the
range $0\leq k\leq\dim f$ are zero.

\subsection{Simplicial lattices}\label{sec:simplicial-lattices}

For an integer $m\geq0$, define the simplicial lattice
\[
\mathbb T_m^d:=\{\alpha\in\mathbb N_0^{d+1}:|\alpha|=m\},
\qquad
\operatorname{supp}\alpha:=\{i:\alpha_i>0\},
\qquad
\lambda^\alpha:=\prod_{i=0}^d\lambda_i^{\alpha_i}.
\]
For $f\in\Delta_\ell(T)$, let
\[
\mathbb T_m^\ell(f):=\{\alpha\in\mathbb T_m^d:\operatorname{supp}\alpha\subseteq f\},
\qquad
\mathbb T_m^\ell(\mathring f):=\{\alpha\in\mathbb T_m^d:\operatorname{supp}\alpha=f\}.
\]
For $m\geq1$, the geometric realization is
\[
\begin{gathered}
x_\alpha:=\frac1m\sum_{i=0}^d\alpha_i\texttt v_i,\\
x_\alpha\in f\iff\operatorname{supp}\alpha\subseteq f,
\qquad
x_\alpha\in\mathring f\iff\operatorname{supp}\alpha=f.
\end{gathered}
\]
Here $f$ denotes the geometric simplex in statements about $x_\alpha$ and its vertex set in statements about $\operatorname{supp}\alpha$. The same lattice notation applies to local multi-indices on a face.

For $m\geq0$, the monomials $\{\lambda^\alpha:\alpha\in\mathbb T_m^\ell(f)\}$
form a Bernstein basis of $\mathbb P_m(f)$. Thus every $p\in\mathbb P_m(f)$
has a unique representation
\[
p=\sum_{\alpha\in\mathbb T_m^\ell(f)}c_\alpha\lambda^\alpha.
\]
Using the same barycentric expression on $T$ defines its degree-$m$
Bernstein extension. This extension depends on the chosen degree $m$;
even a constant on $f$ can extend to a nonconstant polynomial on $T$.

\subsection{Trimmed spaces}

Let $X_T(x)=x-x_T$, where $x_T$ is the barycenter of $T$.
For $p\geq1$,
define the Koszul operator
\[
\kappa_p:\Lambda^p(T)\longrightarrow\Lambda^{p-1}(T),
\qquad
\kappa_p\omega:=\iota_{X_T}\omega,
\]
and set $\kappa_0=0$. We omit the degree subscript when it is clear. At $x\in T$,
\[
(\kappa_p\omega)_x(\boldsymbol v_1,\ldots,\boldsymbol v_{p-1})
=
\omega_x(X_T(x),\boldsymbol v_1,\ldots,\boldsymbol v_{p-1}).
\]
Homogeneity is relative to $x_T$, i.e., $\mathbb H_r(T)=\operatorname{span}\{(x-x_T)^\alpha:|\alpha|=r\}$, and 
$\mathbb H_r\Lambda^k(T)$ denotes forms with homogeneous degree-$r$
coefficients. The identities
\begin{align}\label{eq:kappa-barycentric}
\kappa\dd\lambda_i&=\lambda_i-\frac1{d+1},\\
\label{eq:kappawedge}
\kappa(\omega\wedge\eta)
&=(\kappa\omega)\wedge\eta+(-1)^p\omega\wedge\kappa\eta,
\qquad \omega\in\Lambda^p(T),
\end{align}
follow by direct calculation and will be used below. Cartan's identity gives
$\kappa\dd+\dd\kappa=(r+k)\operatorname{id}$ on
$\mathbb H_r\Lambda^k(T)$; see
\cite{arnoldFiniteElementExterior2018,ArnoldFalkWinthe2006Finite}.
For $r\geq0$ and $0\leq k\leq d$, the trimmed space is
\begin{equation}\label{eq:trimmed-def}
\mathbb P_{r+1}^-\Lambda^k(T)
=\mathbb P_r\Lambda^k(T)\oplus\kappa\mathbb H_r\Lambda^{k+1}(T).
\end{equation}
Changing the center changes the Koszul term only by a form in
$\mathbb P_r\Lambda^k(T)$, so the space is independent of that choice.
At the endpoint form degrees $k=0$ and $k=d$,
\[
\mathbb P_{r+1}^-\Lambda^0(T)=\mathbb P_{r+1}(T),\qquad
\mathbb P_{r+1}^-\Lambda^d(T)=\mathbb P_r\Lambda^d(T).
\]

\subsection{Whitney forms and traces}

For an oriented $k$-face $f$, define the Whitney form
\cite{Whitney1957}
\begin{equation}\label{eq:whitney}
w_f=\sum_{i=0}^k(-1)^i\lambda_{f(i)}
\dd\lambda_{f(0)}\wedge\cdots\wedge
\widehat{\dd\lambda_{f(i)}}\wedge\cdots\wedge\dd\lambda_{f(k)},
\end{equation}
where the hat denotes an omitted factor. Equations~\eqref{eq:kappa-barycentric} and \eqref{eq:kappawedge} give
\begin{equation}\label{wf}
w_f
=
\kappa\dd\lambda_f
+
\frac1{d+1}
\sum_{i=0}^k(-1)^i
\dd\lambda_{f(0)}\wedge\cdots
\wedge\widehat{\dd\lambda_{f(i)}}\wedge\cdots
\wedge\dd\lambda_{f(k)}.
\end{equation}
In particular, $w_f\in\mathbb P_1^-\Lambda^k(T)$.

\begin{lemma}\label{lm:dlambda-whitney}
Let $e=[e(0),\ldots,e(k)]$ be an oriented $k$-face, with
$0\leq k\leq d-1$. Orient $j\sqcup e$ as
$[j,e(0),\ldots,e(k)]$ for each $j\in e^*$. Then
\[
\dd\lambda_e=\sum_{j\in e^*}w_{j\sqcup e}.
\]
\end{lemma}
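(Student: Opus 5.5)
Expanding the Whitney form of $j\sqcup e=[j,e(0),\ldots,e(k)]$ by \eqref{eq:whitney}, the $i=0$ term (coefficient $\lambda_j$) is $\lambda_j\,\dd\lambda_e$. For $i\geq1$ the omitted factor is $\dd\lambda_{e(i-1)}$, so the term is
\[
(-1)^{i}\lambda_{e(i-1)}\,\dd\lambda_j\wedge\dd\lambda_{e(0)}\wedge\cdots\wedge\widehat{\dd\lambda_{e(i-1)}}\wedge\cdots\wedge\dd\lambda_{e(k)}.
\]
Hence
\[
w_{j\sqcup e}=\lambda_j\,\dd\lambda_e-\dd\lambda_j\wedge\sum_{i=0}^{k}(-1)^i\lambda_{e(i)}\,\dd\lambda_{e(0)}\wedge\cdots\wedge\widehat{\dd\lambda_{e(i)}}\wedge\cdots\wedge\dd\lambda_{e(k)}
=\lambda_j\,\dd\lambda_e-\dd\lambda_j\wedge w_e .
\]
The sign bookkeeping here is the main point to check: shifting the index by one produces the minus sign in front of $\dd\lambda_j\wedge w_e$.

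Summing over $j\in e^*$ gives
\[
\sum_{j\in e^*}w_{j\sqcup e}=\Bigl(\sum_{j\in e^*}\lambda_j\Bigr)\dd\lambda_e-\Bigl(\sum_{j\in e^*}\dd\lambda_j\Bigr)\wedge w_e .
\]
Next we use the partition of unity $\sum_{i=0}^d\lambda_i=1$ and its differential $\sum_{i=0}^d\dd\lambda_i=0$. These give $\sum_{j\in e^*}\lambda_j=1-\sum_{i\in e}\lambda_i$ and $\sum_{j\in e^*}\dd\lambda_j=-\sum_{i\in e}\dd\lambda_i$. Substituting, the sum becomes
\[
\dd\lambda_e-\Bigl(\sum_{m=0}^k\lambda_{e(m)}\Bigr)\dd\lambda_e+\Bigl(\sum_{m=0}^k\dd\lambda_{e(m)}\Bigr)\wedge w_e .
\]

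It therefore remains to show the identity $\bigl(\sum_m\dd\lambda_{e(m)}\bigr)\wedge w_e=\bigl(\sum_m\lambda_{e(m)}\bigr)\dd\lambda_e$.

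Fix $m$ and expand $\dd\lambda_{e(m)}\wedge w_e$ using \eqref{eq:whitney}. Only the $i=m$ term survives, since every other term already contains $\dd\lambda_{e(m)}$. That surviving term is
\[
(-1)^m\lambda_{e(m)}\,\dd\lambda_{e(m)}\wedge\dd\lambda_{e(0)}\wedge\cdots\wedge\widehat{\dd\lambda_{e(m)}}\wedge\cdots\wedge\dd\lambda_{e(k)}.
\]
Moving $\dd\lambda_{e(m)}$ back to position $m$ takes $m$ transpositions and contributes another factor $(-1)^m$. So each term equals $\lambda_{e(m)}\,\dd\lambda_e$, and summing over $m$ gives the required identity.

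The two $\bigl(\sum_m\lambda_{e(m)}\bigr)\dd\lambda_e$ terms then cancel, leaving $\dd\lambda_e$. The hypothesis $k\le d-1$ only guarantees that $e^*\neq\varnothing$. The whole argument is elementary sign bookkeeping, and no earlier results beyond the definitions are needed.
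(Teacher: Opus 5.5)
Your proof is correct and follows essentially the same route as the paper: expand each $w_{j\sqcup e}$ as $\lambda_j\,\dd\lambda_e-\dd\lambda_j\wedge w_e$, replace $\sum_{j\in e^*}\dd\lambda_j$ by $-\sum_h\dd\lambda_{e(h)}$ so that only the diagonal terms survive, and use $\sum_i\lambda_i=1$. The only difference is cosmetic: you substitute $\sum_{j\in e^*}\lambda_j=1-\sum_m\lambda_{e(m)}$ first and then cancel, whereas the paper adds the two coefficient sums directly to get $1$.
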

\begin{proof}
Expanding the Whitney forms gives
\[
\begin{aligned}
\sum_{j\in e^*}w_{j\sqcup e}
={}&\left(\sum_{j\in e^*}\lambda_j\right)\dd\lambda_e\\
&+\sum_{i=0}^{k}(-1)^{i+1}\lambda_{e(i)}
\left(\sum_{j\in e^*}\dd\lambda_j\right)
\wedge\dd\lambda_{e(0)}\wedge\cdots
\wedge\widehat{\dd\lambda_{e(i)}}\wedge\cdots
\wedge\dd\lambda_{e(k)}.
\end{aligned}
\]
Since $\sum_{j\in e^*}\dd\lambda_j=-\sum_{h=0}^{k}\dd\lambda_{e(h)}$,
only $h=i$ contributes to the $i$-th wedge product. Thus
\[
\sum_{j\in e^*}w_{j\sqcup e}
=\left(\sum_{j\in e^*}\lambda_j+\sum_{i=0}^{k}\lambda_{e(i)}\right)
\dd\lambda_e
=\dd\lambda_e.
\]
\end{proof}


For $e=[e(0),\ldots,e(k)]\in\Delta_k(f)$, the ambient and intrinsic Whitney forms are
\[
\begin{aligned}
w_e&=\sum_{i=0}^k(-1)^i\lambda_{e(i)}\,\dd\lambda_{e(0)}\wedge\cdots\wedge\widehat{\dd\lambda_{e(i)}}\wedge\cdots\wedge\dd\lambda_{e(k)},\\
w_e^f=\operatorname{tr}_f w_e&=\sum_{i=0}^k(-1)^i\lambda_{e(i)}\,\dd_f\lambda_{e(0)}\wedge\cdots\wedge\widehat{\dd_f\lambda_{e(i)}}\wedge\cdots\wedge\dd_f\lambda_{e(k)},
\end{aligned}
\]
where $\dd_f\lambda_i=\operatorname{tr}_f\dd\lambda_i$.
Under the Euclidean identification of one-forms with vectors, the proxies of $\dd\lambda_i$ and $\dd_f\lambda_i$ are $\nabla\lambda_i$ and $\nabla_f\lambda_i$, respectively. Here $\nabla_f\lambda_i$ is the orthogonal projection of $\nabla\lambda_i$ onto the tangent space of $f$, i.e., the surface gradient of $\lambda_i$.
Lemma \ref{lm:dlambda-whitney}, applied on $f\in\Delta_k(T)$, implies that $w_f^f\in \Lambda^k(f)$ is a volume form on $f$ and
\begin{equation}\label{eq:whitney-integral}
w_f^f=\dd_f\lambda_{f(1:k)},\qquad
\int_f w_f^f=\frac1{k!}.
\end{equation}
The normalization follows by affine pullback to the reference simplex.
For $k=0$, both statements mean evaluation at the vertex, where
$w_f^f=1$. In particular,
$w_T=\dd\lambda_1\wedge\cdots\wedge\dd\lambda_d$ is constant.

For $f=[f(0),\ldots,f(\ell)]$, write
\[
\Delta_k^0(f)=\{e\in\Delta_k(f):f(0)\in e\}.
\]
The root $f(0)$ selects $\binom{\ell}{k}$ forms. For $f=T$,
\begin{equation}\label{eq:Altbasis}
\{\dd\lambda_{e(1:k)}:e\in\Delta_k^0(T)\}
\quad\text{is a basis of }\operatorname{Alt}^k(\mathscr T^T),
\end{equation}
because $\dd\lambda_1,\ldots,\dd\lambda_d$ are independent. Likewise, with $k^*=d-k$,
\[
\{\dd\lambda_{e^*}:e\in\Delta_k^0(T)\}
\quad\text{is a basis of }\operatorname{Alt}^{k^*}(\mathscr T^T),
\]
since $e^*$ runs through all $k^*$-element subsets of $\{1,\ldots,d\}$.
The identity $\sum_{i=0}^d\lambda_i=1$ gives the linear dependence $\sum_{i=0}^d\dd\lambda_i=0$. Choosing vertex $0$ as the root specifies which covector to omit. Geometrically, taking $\texttt v_0$ as the affine origin gives the edge basis $\{\texttt v_i-\texttt v_0\}_{i=1}^d$, whose dual basis is $\{\dd\lambda_i\}_{i=1}^d$. Any vertex can serve as the root.

\subsection{Polynomial multiples}

\begin{lemma}\label{lm:qwf}
If $q\in\mathbb P_r(T)$ and $f\in\Delta_k(T)$, then
$qw_f\in\mathbb P_{r+1}^-\Lambda^k(T)$.
\end{lemma}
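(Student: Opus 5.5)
Here is the plan: write $q w_f$ as $q$ times the decomposition \eqref{wf}, and check that each piece lies in one of the two summands of \eqref{eq:trimmed-def}.

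Step 1 (split $q$). Expand $q$ in homogeneous components about $x_T$, so that $q=q_r+q'$ with $q_r\in\mathbb H_r(T)$ and $q'\in\mathbb P_{r-1}(T)$. By \eqref{wf}, $w_f=\kappa\dd\lambda_f+c$, where $c$ is a constant-coefficient $k$-form and $\dd\lambda_f$ is a constant $(k+1)$-form. Both the Koszul term $\kappa\dd\lambda_f$ and the term $\frac1{d+1}\sum$ have coefficients of degree at most one.

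Step 2 (the lower-order part). The product $q'w_f$ has coefficients of degree at most $(r-1)+1=r$. Hence $q'w_f\in\mathbb P_r\Lambda^k(T)$. Likewise $q_r c\in\mathbb P_r\Lambda^k(T)$.

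Step 3 (the top-degree Koszul part). It remains to handle $q_r\,\kappa\dd\lambda_f$. The Koszul operator is $C^\infty$-linear in the form, since it is an interior product with the vector field $X_T$. Therefore
\[
q_r\,\kappa\dd\lambda_f=\kappa(q_r\dd\lambda_f).
\]
The form $q_r\dd\lambda_f$ lies in $\mathbb H_r\Lambda^{k+1}(T)$, because $\dd\lambda_f$ has constant coefficients. So this term belongs to $\kappa\mathbb H_r\Lambda^{k+1}(T)$. When $k=d$, $\dd\lambda_f=0$, the term vanishes, and the claim is immediate.

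Summing the three pieces gives $qw_f\in\mathbb P_r\Lambda^k(T)+\kappa\mathbb H_r\Lambda^{k+1}(T)=\mathbb P_{r+1}^-\Lambda^k(T)$.

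There is no real obstacle. The only point that needs care is the $C^\infty$-linearity of $\kappa$ used in Step 3. This holds because $\kappa$ is pointwise contraction, so it commutes with multiplication by functions. One must also keep homogeneity taken relative to $x_T$, consistent with the definition of $\kappa$.
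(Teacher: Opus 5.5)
Your proof is correct and follows the same route as the paper. You split $q$ into its top homogeneous part and a lower-degree remainder, and use \eqref{wf} to write $w_f$ as $\kappa\dd\lambda_f$ plus a constant form. Then $q_r\kappa\dd\lambda_f=\kappa(q_r\dd\lambda_f)$ lies in $\kappa\mathbb H_r\Lambda^{k+1}(T)$, and everything else lies in $\mathbb P_r\Lambda^k(T)$. The paper groups the remaining terms as $\widetilde q\,\kappa\dd\lambda_f$ and $q(w_f-\kappa\dd\lambda_f)$ instead of your $q'w_f$ and $q_rc$, which is only a difference in bookkeeping.
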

\begin{proof}
Write $q=q_r+\widetilde q$, with $q_r\in\mathbb H_r(T)$ and
$\widetilde q\in\mathbb P_{r-1}(T)$. By~\eqref{wf}, the form $w_f-\kappa\dd\lambda_f$ is constant, so
\[
qw_f=\kappa(q_r\dd\lambda_f)
+\widetilde q\,\kappa\dd\lambda_f+q(w_f-\kappa\dd\lambda_f).
\]
The first term lies in $\kappa\mathbb H_r\Lambda^{k+1}(T)$, and the last two lie in $\mathbb P_r\Lambda^k(T)$.
For $r=0$, take $\widetilde q=0$.
\end{proof}

\begin{lemma}\label{lm:Pwspan}
For $r\geq0$ and $0\leq k\leq d$,
\begin{equation}\label{eq:Pwspan}
\mathbb P_{r+1}^-\Lambda^k(T)
=\sum_{f\in\Delta_k(T)}\mathbb P_r(T)w_f.
\end{equation}
\end{lemma}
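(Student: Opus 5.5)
The inclusion "$\supseteq$" is exactly Lemma~\ref{lm:qwf}. So the plan is to prove "$\subseteq$" by showing that each summand in the definition \eqref{eq:trimmed-def} of the trimmed space lies in the right-hand side of \eqref{eq:Pwspan}. Write $S_r:=\sum_{f\in\Delta_k(T)}\mathbb P_r(T)w_f$.

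\emph{Step 1: $\mathbb P_r\Lambda^k(T)\subseteq S_r$.} Every $k$-form with polynomial coefficients of degree $\le r$ is a combination of $p\,\dd\lambda_{e}$, with $p\in\mathbb P_r(T)$ and $e$ ranging over $k$-element subsets (ordered lists) of vertices. By \eqref{eq:Altbasis}, the covectors $\dd\lambda_{e(1:k)}$ with $e\in\Delta_k^0(T)$ already span the constant $k$-forms, but any ordered list of $k$ indices will do.
\begin{itemize}
\item For $k<d$, take such a list, viewed as a $(k-1)$-face $g$, and apply Lemma~\ref{lm:dlambda-whitney} with $k$ replaced by $k-1$. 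This gives $\dd\lambda_g=\sum_{j\in g^*}w_{j\sqcup g}$, a sum of Whitney $k$-forms with constant coefficients. Hence $p\,\dd\lambda_g\in S_r$.
\item For $k=0$ the claim is immediate, since $\sum_i w_{[i]}=\sum_i\lambda_i=1$.
\item For $k=d$ the needed fact is \eqref{eq:whitney-integral}, which gives $w_T=\dd\lambda_{1:d}$ and so handles every $\dd\lambda_g$ directly.
\end{itemize}

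\emph{Step 2: $\kappa\mathbb H_r\Lambda^{k+1}(T)\subseteq S_r$.} For $k=d$ this term vanishes. Otherwise, write an element of $\mathbb H_r\Lambda^{k+1}(T)$ as $\sum_F q_F\,\dd\lambda_F$, with $q_F\in\mathbb H_r(T)$ and $F$ running over $(k+1)$-vertex lists. Applying \eqref{eq:kappawedge} with $\omega=q_F$ gives
\[
\kappa(q_F\dd\lambda_F)=q_F\,\kappa\dd\lambda_F .
\]
By \eqref{wf}, $\kappa\dd\lambda_F=w_F-c_F$, where $c_F$ is a constant $k$-form. Therefore $q_F\kappa\dd\lambda_F=q_Fw_F-q_Fc_F$. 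The first term lies in $\mathbb P_r(T)w_F$. The second lies in $\mathbb P_r\Lambda^k(T)$, which is contained in $S_r$ by Step 1.

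Combining Steps 1 and 2 with \eqref{eq:trimmed-def} gives equality. The only delicate points are the endpoint cases $k=0$ and $k=d$ in Step 1, where Lemma~\ref{lm:dlambda-whitney} does not apply directly. Beyond that, no real obstacle is expected: the argument is simply Lemma~\ref{lm:dlambda-whitney} (constants are Whitney combinations) combined with \eqref{wf} (the Koszul image equals Whitney forms modulo constants).
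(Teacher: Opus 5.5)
Your proof is correct and follows the paper's argument. It uses Lemma~\ref{lm:qwf} for one inclusion, Lemma~\ref{lm:dlambda-whitney} (or $\sum_i\lambda_i=1$ when $k=0$) to absorb $\mathbb P_r\Lambda^k(T)$, and \eqref{wf} to show that $q\,\kappa\dd\lambda_F$ differs from $q\,w_F$ by a form in $\mathbb P_r\Lambda^k(T)$. Two small corrections: after your index shift, Lemma~\ref{lm:dlambda-whitney} only requires $0\leq k-1\leq d-1$, so it also covers $k=d$ (there $g^*$ is a single vertex and $\dd\lambda_g=\pm w_T$), and your first bullet should read $1\leq k\leq d$ rather than $k<d$, since $k=0$ is the only genuinely separate case.
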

\begin{proof}
Lemma~\ref{lm:qwf} gives one inclusion. The Whitney forms span
constant $k$-forms by Lemma~\ref{lm:dlambda-whitney}, or by
$\sum_i\lambda_i=1$ when $k=0$. Thus the right-hand side contains
$\mathbb P_r\Lambda^k(T)$. The forms $\dd\lambda_f$, $f\in\Delta_k(T)$, span the constant $(k+1)$-forms. By~\eqref{wf}, each $p\kappa\dd\lambda_f$, with $p\in\mathbb H_r(T)$, differs from $pw_f$ by a form in $\mathbb P_r\Lambda^k(T)$. Thus the right-hand side also contains $\kappa\mathbb H_r\Lambda^{k+1}(T)$. For $k=d$, this component is zero.
\end{proof}

Taking traces in~\eqref{eq:Pwspan} gives
\begin{equation}\label{eq:trimmed-face-trace}
\tr_F\mathbb P_{r+1}^-\Lambda^k(T)=\mathbb P_{r+1}^-\Lambda^k(F),
\qquad \dim F\geq k.
\end{equation}
Indeed, $\tr_F w_e=0$ if $e\nsubseteq F$ and $\tr_F w_e=w_e^F$ otherwise, and every scalar polynomial on $F$ has an extension of the same degree bound to $T$.

For $r=0$, the sum in~\eqref{eq:Pwspan} is direct. For $r>0$, it is generally redundant;
on an oriented triangle, for example,
$\lambda_iw_{j\ell}-\lambda_jw_{i\ell}+\lambda_\ell w_{ij}=0$.
Indeed, substituting $w_{ab}=\lambda_a\dd\lambda_b-\lambda_b\dd\lambda_a$ gives
\[
\begin{aligned}
&\lambda_i(\lambda_j\dd\lambda_\ell-\lambda_\ell\dd\lambda_j)
-\lambda_j(\lambda_i\dd\lambda_\ell-\lambda_\ell\dd\lambda_i)
+\lambda_\ell(\lambda_i\dd\lambda_j-\lambda_j\dd\lambda_i)=0,
\end{aligned}
\]
since the terms cancel in pairs. We will give a direct sum using bubble polynomial forms.

\section{Geometric decomposition of trimmed polynomial forms}\label{sec:geomdecompTrimPolyForm}

We recall the rooted Whitney--bubble decomposition of
\cite{ArnoldFalkWinther2009}. Logarithmic coordinates prove the independence of the rooted bubble forms. Face traces then give the geometric decomposition. We orient each rooted face as $f=[0,f(1),\ldots,f(k)]$.

\subsection{Bubble polynomial forms}\label{sec:bubblePolyForm}

We first construct trace-free barycentric forms. 

\begin{lemma}[Trace-free barycentric forms]\label{lem:bubble-dlambda}
Let $f\in\Delta_k(T)$ for $0\leq k\leq d$. Then the $(k+1)$-form $b_{f^*}\dd\lambda_f$ satisfies
\begin{equation}\label{eq:tracefree}
{\tr}_{\partial T}\bigl(b_{f^*}\dd\lambda_f\bigr)=0.
\end{equation}
In particular, the $k$-form
\[
b_{f(1:k)^*}\dd\lambda_{f(1:k)}
=
b_{f^*}\lambda_{f(0)}\dd\lambda_{f(1:k)}
\]
also has zero tangential trace on $\partial T$, with the empty wedge understood to be $1$.
\end{lemma}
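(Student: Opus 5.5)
Since $\partial T$ is the union of the facets $F_i=\{i\}^*$, $i=0,\ldots,d$, it suffices to prove $\tr_{F_i}(b_{f^*}\dd\lambda_f)=0$ for each $i$. The trace is a pullback under the face inclusion, so it is multiplicative: $\tr_{F_i}(b_{f^*}\dd\lambda_f)=(b_{f^*}|_{F_i})\,\dd_{F_i}\lambda_{f(0)}\wedge\cdots\wedge\dd_{F_i}\lambda_{f(k)}$. We therefore split into two cases according to whether $i\in f^*$ or $i\in f$.

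\emph{Case $i\in f^*$.} The factor $\lambda_i$ divides $b_{f^*}$. Since $\lambda_i$ vanishes on $F_i$, the scalar coefficient of the trace is identically zero there.

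\emph{Case $i\in f$.} Here $\lambda_i|_{F_i}=0$, so $\dd_{F_i}\lambda_i=\tr_{F_i}\dd\lambda_i=\dd(\lambda_i|_{F_i})=0$. The wedge $\dd\lambda_f$ contains $\dd\lambda_i$ as a factor, so its pullback vanishes. In both cases the trace on $F_i$ vanishes, which proves~\eqref{eq:tracefree}. When $k=d$, $\dd\lambda_f$ is a $(d+1)$-form on a $d$-dimensional space and is already zero, so the statement is trivial.

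For the second claim, apply the same argument to the face $g=f(1:k)$, which is a $(k-1)$-face. The case $k=0$ gives $g=\varnothing$, and then the argument needs separate care: $\dd\lambda_\varnothing=1$, so only the first case can occur. In that case $g^*=\{0,\ldots,d\}$, so $b_{g^*}=b_T$ vanishes on every facet, and the argument still applies. For $k\geq1$, $g$ is a genuine face and the first part applies verbatim, with $k$ replaced by $k-1$.

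It remains to check the product identity $b_{g^*}=b_{f^*}\lambda_{f(0)}$. This holds because $g^*=f^*\sqcup\{f(0)\}$. Nothing in the argument is a real obstacle. The only point to keep explicit is that the trace of $\dd\lambda_i$ onto $F_i$ vanishes because $\lambda_i$ is constant (zero) on $F_i$.
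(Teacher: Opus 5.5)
Your proposal is correct and follows essentially the same argument as the paper: you split facet by facet into the cases $i\in f^*$ (the scalar factor $\lambda_i$ in $b_{f^*}$ vanishes on $F_i$) and $i\in f$ (the wedge factor $\dd\lambda_i$ has zero trace on $F_i$). You then obtain the second claim by applying the first to $f(1:k)$ for $k\geq1$ and treating $k=0$ directly via $\tr_{\partial T}b_T=0$. Your extra checks, namely the triviality for $k=d$ and the identity $f(1:k)^*=f^*\sqcup\{f(0)\}$, are accurate and only make explicit what the paper leaves implicit.
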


\begin{proof}
Let $F_i$ be the facet opposite vertex $i$. Since
\[
\lambda_i|_{F_i}=0,
\qquad
{\tr}_{F_i}\dd\lambda_i
=
\dd_{F_i}(\lambda_i|_{F_i})
=
0,
\]
a form has zero tangential trace on $F_i$ if it contains $\lambda_i$ as a scalar factor or $\dd\lambda_i$ as a wedge factor. 
For $b_{f^*}\dd\lambda_f$, these cases correspond to $i\in f^*$ and $i\in f$, respectively. 

For $k=0$, the second assertion reduces to
${\tr}_{\partial T}b_T=0$, which is immediate. For $k\geq1$, apply
\eqref{eq:tracefree} to the face $f(1:k)$.
\end{proof}

Let $\Delta_k^0(T)$ denote the $k$-faces containing vertex $0$. The family 
\[
\{b_{f^*}\lambda_0\dd\lambda_{f(1:k)}:f\in\Delta_k^0(T)\}\subset \ker(\tr_{\partial T})
\] 
is linearly independent because the forms $\dd\lambda_{f(1:k)}$ form a basis of $\alt^k$and each scalar factor is a nonzero polynomial. These forms belong to $\mathbb P_{d-k+1}\Lambda^k(T)$. Replacing $\lambda_0\dd\lambda_{f(1:k)}$ by the Whitney form $w_f$ preserves the zero-trace property and gives forms in the trimmed family. We call the replacement
\[
\lambda_0\dd\lambda_{f(1:k)}\longmapsto w_f,
\qquad f\in\Delta_k^0(T),
\]
the \emph{Whitney completion} with root $0$. It adds the remaining terms in \eqref{eq:whitney} to the term with undifferentiated coordinate $\lambda_0$.

\begin{lemma}[Trace-free weighted Whitney forms]\label{lm:trbubble}
Let $0\leq k\leq d-1$. For every $f\in\Delta_k(T)$,
\[
b_{f^*}w_f
\in
\mathbb P_{d-k+1}^-\Lambda^k(T)
\cap
\ker({\tr}_{\partial T}),
\]
where $w_f$ is defined in \eqref{eq:whitney}.
\end{lemma}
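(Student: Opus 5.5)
Membership in the trimmed space follows from Lemma~\ref{lm:qwf}. The scalar $b_{f^*}$ is a product of $|f^*|=d-k$ barycentric coordinates, so $b_{f^*}\in\mathbb P_{d-k}(T)$. Applying Lemma~\ref{lm:qwf} with $r=d-k$ gives $b_{f^*}w_f\in\mathbb P_{d-k+1}^-\Lambda^k(T)$. The hypothesis $k\leq d-1$ ensures that $f^*$ is nonempty, so the scalar factor is a genuine bubble-type product.

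For the trace, I would argue facet by facet, exactly as in the proof of Lemma~\ref{lem:bubble-dlambda}. It suffices to show $\tr_{F_i}(b_{f^*}w_f)=0$ for every $i\in\{0,\ldots,d\}$. There are two cases.

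\emph{Case $i\in f^*$.} Then $\lambda_i$ is a scalar factor of $b_{f^*}$ and vanishes on $F_i$. Since the trace is a pullback, $\tr_{F_i}(b_{f^*}w_f)=(b_{f^*}|_{F_i})\,\tr_{F_i}w_f=0$.

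\emph{Case $i\in f$, say $i=f(j)$.} Expand $w_f$ by~\eqref{eq:whitney}. Every term with index $i'\neq j$ contains $\dd\lambda_{f(j)}$ as a wedge factor, and $\tr_{F_i}\dd\lambda_i=0$. The remaining term $i'=j$ carries the scalar factor $\lambda_{f(j)}=\lambda_i$, which vanishes on $F_i$. Hence $\tr_{F_i}w_f=0$, and so the product has zero trace as well.

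Equivalently, the second case follows from the standard fact that $\tr_F w_f=0$ whenever $f\nsubseteq F$, as noted after~\eqref{eq:trimmed-face-trace}. Since the facets $F_i$ cover $\partial T$, the two cases together give $\tr_{\partial T}(b_{f^*}w_f)=0$.

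I expect no real obstacle. The only point needing care is the term-by-term check in the second case: one must confirm that each summand of the Whitney form contains either $\lambda_i$ or $\dd\lambda_i$. This holds because the summands differ only in which factor $\lambda_{f(j)}$ is left undifferentiated.
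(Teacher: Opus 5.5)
Your proposal is correct and follows the paper's proof: membership comes from Lemma~\ref{lm:qwf} with $r=d-k$, and the trace vanishes on each facet $F_i$. For $i\in f^*$ this is because $b_{f^*}$ contains the factor $\lambda_i$, and for $i\in f$ because every term of $w_f$ contains either $\lambda_i$ or $\dd\lambda_i$.
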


\begin{proof}
\emergencystretch=1em
Since $\deg b_{f^*}=d-k$, membership in the trimmed space follows from Lemma~\ref{lm:qwf}. Let $F_i$ be the facet opposite vertex $i$. If $i\in f^*$, then $b_{f^*}$ contains $\lambda_i$. If $i\in f$, every term of $w_f$ contains either $\lambda_i$ or $\dd\lambda_i$. In both cases, the tangential trace on $F_i$ vanishes.
\end{proof}

On the interior of $T$, introduce the logarithmic barycentric coordinates
\[
z_i:=\log\frac{\lambda_i}{\lambda_0},
\qquad
i=1,\ldots,d.
\]
Their differentials are
\[
\dd z_i
=
\frac{\dd\lambda_i}{\lambda_i}
-
\frac{\dd\lambda_0}{\lambda_0}.
\]
For $1\leq i,j\leq d$, the affine barycentric identities give
\[
\dd z_j\bigl(\lambda_i(x)(\texttt v_i-x)\bigr)=\delta_{ij}.
\]
Hence $\dd z_1,\ldots,\dd z_d$ form a basis of the cotangent space at every interior point.

\begin{lemma}[Logarithmic representation of Whitney forms]\label{lm:whitney-log}
For $f\in\Delta_k^0(T)$, the following identities hold on $\mathring T$:
\begin{equation}\label{eq:whitney-log}
w_f=b_f\,\dd z_{f(1:k)},
\qquad
b_{f^*}w_f=b_T\,\dd z_{f(1:k)},
\end{equation}
where
\[
\dd z_{f(1:k)}
:=
\dd z_{f(1)}\wedge\cdots\wedge\dd z_{f(k)},
\qquad
\dd z_\varnothing:=1.
\]
\end{lemma}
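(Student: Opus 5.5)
Set $u_i=\dd\lambda_i/\lambda_i$ on $\mathring T$, so that $\dd z_i=u_i-u_0$. The approach is a direct expansion of the wedge product of logarithmic differentials, followed by comparison with the definition \eqref{eq:whitney}; the second identity then follows by multiplying by $b_{f^*}$.

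Write $f=[0,f(1),\ldots,f(k)]$. Since each $u_0$ appears at most once in a nonzero product ($u_0\wedge u_0=0$), expanding
\[
\dd z_{f(1:k)}=(u_{f(1)}-u_0)\wedge\cdots\wedge(u_{f(k)}-u_0)
\]
leaves only the term with no $u_0$ and the $k$ terms with exactly one $u_0$. Thus
\[
\dd z_{f(1:k)}=u_{f(1)}\wedge\cdots\wedge u_{f(k)}-\sum_{j=1}^k u_{f(1)}\wedge\cdots\wedge u_{f(j-1)}\wedge u_0\wedge u_{f(j+1)}\wedge\cdots\wedge u_{f(k)}.
\]
In the $j$-th term, move $u_0$ to the front past $j-1$ factors. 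This gives a sign $(-1)^{j-1}$, so after including the minus sign the term becomes $(-1)^j u_0\wedge u_{f(1)}\wedge\cdots\widehat{u_{f(j)}}\cdots\wedge u_{f(k)}$. Multiply by $b_f=\lambda_0\lambda_{f(1)}\cdots\lambda_{f(k)}$. Each $u_i$ present absorbs its $\lambda_i$ and becomes $\dd\lambda_i$. The single missing index leaves an undifferentiated coefficient: $\lambda_0$ in the first term, and $\lambda_{f(j)}$ in the $j$-th term. Writing $f(0)=0$, this reads
\[
b_f\,\dd z_{f(1:k)}=\sum_{i=0}^k(-1)^i\lambda_{f(i)}\,\dd\lambda_{f(0)}\wedge\cdots\wedge\widehat{\dd\lambda_{f(i)}}\wedge\cdots\wedge\dd\lambda_{f(k)},
\]
which is exactly \eqref{eq:whitney}. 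The case $k=0$ is trivial, since both sides equal $\lambda_0$.

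The second identity follows by multiplying the first by $b_{f^*}$ and using $b_{f^*}b_f=b_T$, because $f$ and $f^*$ partition $\{0,\ldots,d\}$.

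The only delicate point is the sign bookkeeping when moving $u_0$ to the front, matched against the orientation convention that $f(0)=0$ comes first. Everything else is algebra valid on $\mathring T$, where all $\lambda_i>0$.
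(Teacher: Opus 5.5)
Your proposal is correct and follows the paper's proof essentially step for step. Like the paper, you expand $\dd z_{f(j)}=\dd\lambda_{f(j)}/\lambda_{f(j)}-\dd\lambda_0/\lambda_0$, keep only the terms with at most one $\dd\lambda_0$, and move that factor to the front to pick up $(-1)^j$, which matches \eqref{eq:whitney}; you then get the second identity by multiplying by $b_{f^*}$ and using $b_fb_{f^*}=b_T$.
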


\begin{proof}
Since $f(0)=0$,
\[
b_f\,\dd z_{f(1:k)}
=
\lambda_0\prod_{j=1}^k\lambda_{f(j)}
\bigwedge_{j=1}^k
\left(
\frac{\dd\lambda_{f(j)}}{\lambda_{f(j)}}
-
\frac{\dd\lambda_0}{\lambda_0}
\right).
\]
Terms containing two factors $\dd\lambda_0$ vanish. The term containing no $\dd\lambda_0$ is $\lambda_0\dd\lambda_{f(1:k)}$. Choosing $-\dd\lambda_0/\lambda_0$ from the $i$-th factor and moving it to the front gives
\[
(-1)^i\lambda_{f(i)}
\dd\lambda_0\wedge
\dd\lambda_{f(1)}\wedge\cdots\wedge
\widehat{\dd\lambda_{f(i)}}\wedge\cdots\wedge
\dd\lambda_{f(k)}.
\]
Their sum is $w_f$. Multiplying by $b_{f^*}$ gives the second identity because $b_fb_{f^*}=b_T$.
\end{proof}

The coordinates $z_i$ are defined only on $\mathring T$ and are singular at the boundary. By \eqref{eq:whitney-log},
\[
b_T\,\dd z_{f(1:k)}=b_{f^*}w_f
\qquad\text{on }\mathring T.
\]
The right-hand side is a polynomial form on $T$ with zero boundary trace. Thus, the weighted expression extends to all of $T$, and its apparent singularities are removable.

\begin{lemma}[Independence of weighted Whitney forms]\label{lm:wfbasis}
Let $0\leq k\leq d-1$. The family
\[
\left\{b_{f^*}w_f:f\in\Delta_k^0(T)\right\}
\]
is linearly independent over polynomial coefficients: if
\[
\sum_{f\in\Delta_k^0(T)}p_f b_{f^*}w_f=0
\]
for polynomials $p_f$, then every $p_f$ vanishes. Consequently,
\[
\spa\left\{b_{f^*}w_f:f\in\Delta_k^0(T)\right\}
\subseteq
\mathbb P_{d-k+1}^-\Lambda^k(T)
\cap
\ker({\tr}_{\partial T}),
\]
and this span has dimension $\binom{d}{k}$.
\end{lemma}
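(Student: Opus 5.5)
The plan is to work on the interior $\mathring T$ and use the logarithmic representation of Lemma~\ref{lm:whitney-log}. Suppose $\sum_{f\in\Delta_k^0(T)}p_f b_{f^*}w_f=0$ with polynomial coefficients $p_f$. By \eqref{eq:whitney-log}, on $\mathring T$ this relation reads
\[
b_T\sum_{f\in\Delta_k^0(T)}p_f\,\dd z_{f(1:k)}=0 .
\]
Since $b_T>0$ on $\mathring T$, we may divide by it and obtain $\sum_f p_f\,\dd z_{f(1:k)}=0$ pointwise on $\mathring T$.

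At each interior point, $\dd z_1,\ldots,\dd z_d$ form a basis of the cotangent space. This was shown above via the duality $\dd z_j(\lambda_i(x)(\texttt v_i-x))=\delta_{ij}$. Hence the wedge products $\dd z_{f(1:k)}$ form a basis of $\alt^k$ at each interior point. Here $f$ ranges over $\Delta_k^0(T)$, so $f(1:k)$ runs through all $k$-element subsets of $\{1,\ldots,d\}$. It follows that each $p_f(x)=0$ for every $x\in\mathring T$. A polynomial vanishing on a nonempty open set is identically zero, so every $p_f=0$. For $k=0$ the family is the single element $b_T$ and the claim is immediate.

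Two consequences follow. First, taking constant coefficients shows that the $\binom{d}{k}$ forms $b_{f^*}w_f$ are linearly independent over $\mathbb R$. This count equals $|\Delta_k^0(T)|$, the number of $k$-subsets of $\{1,\ldots,d\}$, so the span has dimension $\binom{d}{k}$. Second, each generator lies in $\mathbb P_{d-k+1}^-\Lambda^k(T)\cap\ker(\tr_{\partial T})$ by Lemma~\ref{lm:trbubble}, which gives the stated inclusion.

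The main point requiring care is that the logarithmic identity holds only on $\mathring T$. The relation, however, is polynomial, so restricting to the open interior loses no information. No step should be a real obstacle beyond confirming that the wedge products of a pointwise cotangent basis form a pointwise basis of $\alt^k$, which is standard linear algebra.
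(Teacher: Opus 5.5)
Your proposal is correct and matches the paper's proof essentially step for step: you rewrite the relation on $\mathring T$ via \eqref{eq:whitney-log}, use $b_T>0$ and the pointwise basis property of the $\dd z_{f(1:k)}$ to get $p_f=0$ on the interior and hence identically, and then invoke Lemma~\ref{lm:trbubble} and $|\Delta_k^0(T)|=\binom{d}{k}$ for the inclusion and the dimension. Your separate treatment of $k=0$ is harmless but unnecessary, since the convention $\dd z_\varnothing=1$ already lets the general argument cover that case.
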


\begin{proof}
By \eqref{eq:whitney-log}, on $\mathring T$,
\[
\sum_{f\in\Delta_k^0(T)}p_f b_{f^*}w_f
=
b_T\sum_{f\in\Delta_k^0(T)}p_f\,\dd z_{f(1:k)}.
\]
Since $b_T>0$ and the forms $\dd z_{f(1:k)}$ form a basis at each interior point, every $p_f$ vanishes on $\mathring T$ and hence vanishes identically. The inclusion follows from Lemma~\ref{lm:trbubble}, and the dimension follows from $|\Delta_k^0(T)|=\binom{d}{k}$.
\end{proof}


For $0\leq k\leq d-1$ and $r\geq0$, define
\begin{equation}\label{eq:BT-}
\mathbb B_{r+1}^-\Lambda^k(T)
:=
\Oplus_{f\in\Delta_k^0(T)}
\mathbb P_{r+k-d}(T)b_{f^*}w_f,
\end{equation}
where polynomial spaces of negative degree are understood to be zero. The sum is direct by Lemma~\ref{lm:wfbasis}. If
$p\in\mathbb P_{r+k-d}(T)$, then $pb_{f^*}\in\mathbb P_r(T)$.
Lemmas~\ref{lm:qwf} and \ref{lm:trbubble} therefore give
\begin{equation}\label{eq:trimbubbleformk0}
\mathbb B_{r+1}^-\Lambda^k(T)
\subseteq
\mathbb P_{r+1}^-\Lambda^k(T)
\cap
\ker({\tr}_{\partial T}).
\end{equation}
This is the bubble basis of
\cite[Proposition~6.5]{ArnoldFalkWinther2009}.
The logarithmic proof above explains the polynomial
independence established in~\cite[Lemma~6.3]{ArnoldFalkWinther2009}.
Consequently, with the convention $\binom ab=0$ for integers $a\geq0$ whenever $b<0$ or $b>a$,
\begin{equation}\label{eq:dimB-}
\begin{aligned}
\dim\mathbb B_{r+1}^-\Lambda^k(T)
&=
\binom{d}{k}\dim\mathbb P_{r+k-d}(T)\\
&=
\dim\mathbb P_{r+k-d}\Lambda^k(T)
=
\binom{d}{k}\binom{r+k}{d}.
\end{aligned}
\end{equation}
We set $\mathbb B_{r+1}^-\Lambda^d(T)=\mathbb P_r\Lambda^d(T)$, since its boundary trace vanishes automatically.


\subsection{Full-family bubble factors}

Recall that $f^*$ is the complementary vertex set and $b_f=\prod_{i\in f}\lambda_i$. Choose a root $f(0)$ on each face; $\Delta_k^0(f)$ denotes its $k$-faces containing that root. Let $\mathscr N^f$ be the normal space of $f$ and $\star_T$ the Euclidean Hodge star. Polynomial spaces of negative degree are zero.

For $r\geq0$ and $0\leq k\leq d$, the full-family bubble space is
\[
\mathbb B_r\Lambda^k(T):=\mathbb P_r\Lambda^k(T)\cap\ker({\tr}_{\partial T}).
\]
For $0\leq k<d$, write $k^*=d-k$. The full-family bubble decomposition
\cite{ChenHuangtnform} is
\[
\mathbb B_r\Lambda^k(T)
=\Oplus_{\ell=k^*}^{d}\Oplus_{f\in\Delta_\ell(T)}
b_f\mathbb P_{r-\ell-1}(f)\otimes\star_T\alt^{k^*}(\mathscr T^f).
\]
For an $\ell$-face $f$, the tangential--normal decomposition gives
\[
\star_T\alt^{k^*}(\mathscr T^f)
=\alt^{d-\ell}(\mathscr N^f)\wedge\alt^{\ell-k^*}(\mathscr T^f).
\]
Choose a root $f(0)$. The normal factors are $\dd\lambda_i$, $i\in f^*$,
and the restrictions of $\dd\lambda_{f(i)}$, $i=1,\ldots,\ell$, form a
tangential basis. Wedging with all normal factors removes the normal
parts of the remaining covectors. Hence a basis of $\star_T\alt^{k^*}(\mathscr T^f)$ is
$\dd\lambda_{f^*\cup g}$, where $g\subseteq f(1:\ell)$ has
$\ell-k^*$ vertices. Set $e=f\setminus g\in\Delta_{k^*}^0(f)$. Then
\[
e^*=f^*\cup g,\qquad
\dd\lambda_{f^*\cup g}=\dd\lambda_{e^*},\qquad
b_f=b_{f\setminus e}b_e,
\]
where $e^*$ is ordered increasingly. Thus
\[
\star_T\alt^{k^*}(\mathscr T^f)
=\operatorname{span}\{\dd\lambda_{e^*}:e\in\Delta_{k^*}^0(f)\},
\qquad \dim\star_T\alt^{k^*}(\mathscr T^f)=\binom{\ell}{k^*},
\]
and the full bubble decomposition becomes
\begin{equation}\label{intro:BrT}
\mathbb B_r\Lambda^k(T)
=\Oplus_{\ell=k^*}^{d}\Oplus_{f\in\Delta_\ell(T)}
\Oplus_{e\in\Delta_{k^*}^0(f)}
b_{f\setminus e}\mathbb P_{r-\ell-1}(f)\,b_e\dd\lambda_{e^*}.
\end{equation}
The factor $b_e\dd\lambda_{e^*}$ already has zero boundary trace on $T$. The additional factor $b_{f\setminus e}$
selects the Bernstein support of the block indexed by $f$.

\subsection{Bubble polynomial spaces on subsimplices}
The preceding construction applies to any simplex. To obtain bubble spaces on an $\ell$-dimensional subsimplex $f$, we replace the ambient simplex $T$ by $f$, its dimension $d$ by $\ell$, and the indexing face by $e\subseteq f$.
\subsubsection{Intrinsic bubble spaces}
Let $f\in\Delta_\ell(T)$ with $0\leq k\leq\ell\leq d$. For $e\in\Delta_k(f)$, the relative bubble polynomial $b_{f\setminus e}$
has degree $\ell-k$. With $f(0)$ as the root, the intrinsic bubble space is
\begin{equation}\label{eq:intrinsicB-}
\mathbb B_{r+1}^-\Lambda^k(f)
:=
\bigoplus_{e\in\Delta_k^0(f)}
\mathbb P_{r+k-\ell}(f)b_{f\setminus e}w_e^f.
\end{equation}
Recall that $\Delta_k^0(f)$ consists of the $k$-faces of $f$ containing $f(0)$. Polynomial spaces of negative degree are understood to be zero.

For $\ell>k$, this space consists of trace-free trimmed forms on $f$. As a
consequence of Corollary~\ref{cor:trimbubbleequality} below,
\[
\mathbb B_{r+1}^-\Lambda^k(f)
=
\mathbb P_{r+1}^-\Lambda^k(f)
\cap
\ker({\tr}_{\partial f}).
\]
When $\ell=k$, the only choice is $e=f$, and $b_{f\setminus e}=b_\varnothing=1$. Hence,
\[
\mathbb B_{r+1}^-\Lambda^k(f)
=
\mathbb P_r(f)w_f^f
=
\mathbb P_{r+1}^-\Lambda^k(f)
=
\mathbb P_{r}\Lambda^k(f)
, \quad f\in \Delta_k(T).
\]
The boundary trace vanishes automatically because $\partial f$ has dimension less than $k$.

\subsubsection{Extension}\label{sec:bernstein-extension}
For $p\in\mathbb P_{r+k-\ell}(f)$, write its unique expansion in the Bernstein basis 
\[
p=\sum_{|\alpha_f|=r+k-\ell}c_{\alpha_f}\lambda_f^{\alpha_f},
\qquad
\lambda_f^{\alpha_f}:=\prod_{i\in f}\lambda_i^{\alpha_i}.
\]
Viewing each $\lambda_i$ as a barycentric coordinate on $T$ defines the extension by the same expression. Its restriction to $f$ is $p$, and its degree is at most $r+k-\ell$. This is a fixed-degree extension: even a constant on $f$ may extend to a nonconstant polynomial on $T$. If $r+k-\ell<0$, the coefficient space is zero and no extension is needed.
Define the face-associated space
\begin{equation}\label{eq:B-}
\mathbb B_{r+1}^-\Lambda^k(T;f)
:=
\Oplus_{e\in\Delta_k^0(f)}
\mathbb P_{r+k-\ell}(f)b_{f\setminus e}w_e.
\end{equation}
For each coefficient $p\in\mathbb P_{r+k-\ell}(f)$,
$p\,b_{f\setminus e}\in\mathbb P_r(T).$
Lemma \ref{lm:qwf} implies
$
\mathbb B_{r+1}^-\Lambda^k(T;f)
\subseteq
\mathbb P_{r+1}^-\Lambda^k(T).$

The tangential trace satisfies
${\tr}_f\bigl(p\,b_{f\setminus e}w_e\bigr)
=
p\,b_{f\setminus e}w_e^f.
$
Thus, it maps the ambient generators to the corresponding intrinsic generators. The directness of \eqref{eq:intrinsicB-} implies both the directness of \eqref{eq:B-} and the injectivity of this trace map. Consequently,
\begin{equation}\label{eq:tracebijectionB}
{\tr}_f:
\mathbb B_{r+1}^-\Lambda^k(T;f)
\longrightarrow
\mathbb B_{r+1}^-\Lambda^k(f)
\end{equation}
is an isomorphism. Its inverse extends each scalar coefficient by its Bernstein representation and replaces $w_e^f$ by $w_e$.

\subsubsection{Trace properties}
Unlike $b_{e^*}w_e$, the form $b_{f\setminus e}w_e$ contains only the bubble factor relative to $f$. It need not vanish on every facet of $T$, but its trace vanishes on every subsimplex that does not contain $f$.

\begin{lemma}\label{lem:facebubbletrace}
Let $f\in\Delta_\ell(T)$ and $e\in\Delta_k(f)$ with $0\leq k\leq\ell\leq d$. If $g\in\Delta(T)$ satisfies $\dim g\geq k$ and $f\nsubseteq g$, then
\[
{\tr}_g\bigl(b_{f\setminus e}w_e\bigr)=0.
\]
\end{lemma}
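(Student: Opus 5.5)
Since $f\nsubseteq g$, there is a vertex $i\in f$ with $i\notin g$. The plan is to show that every term of $b_{f\setminus e}w_e$ contains either the scalar factor $\lambda_i$ or the wedge factor $\dd\lambda_i$. Then the trace on $g$ vanishes, exactly as in the proofs of Lemma~\ref{lem:bubble-dlambda} and Lemma~\ref{lm:trbubble}. The basic observation is that $g$ lies in the facet $F_i$, so $\lambda_i|_g=0$ and $\tr_g\dd\lambda_i=\dd_g(\lambda_i|_g)=0$. Since the trace is a pullback, it is multiplicative under products and wedge products. Any term carrying $\lambda_i$ as a scalar factor or $\dd\lambda_i$ as a wedge factor therefore has zero trace on $g$.

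We split into two cases according to where $i$ sits relative to $e$. If $i\in f\setminus e$, then $\lambda_i$ divides $b_{f\setminus e}$, so the whole form $b_{f\setminus e}w_e$ carries the scalar factor $\lambda_i$ and its trace on $g$ vanishes. If instead $i\in e$, we expand $w_e$ by \eqref{eq:whitney}. The term with undifferentiated coordinate $\lambda_i$ contains the scalar factor $\lambda_i$. Every other term omits some $\dd\lambda_{e(j)}$ with $e(j)\neq i$, so it contains $\dd\lambda_i$ as a wedge factor. Hence each term of $w_e$ has zero trace on $g$, and so does $b_{f\setminus e}w_e$.

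The hypothesis $\dim g\geq k$ is needed only so that the trace of a $k$-form on $g$ is a meaningful, not automatically trivial, statement. When $\dim g<k$ the claim would hold vacuously anyway. The edge case $k=0$ needs a separate check, because then $w_e=\lambda_{e(0)}$ and $e=\{i\}$ if $i\in e$. In that case the form is simply a multiple of $\lambda_i$, and the conclusion again follows.

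I expect no real obstacle here. The only point requiring care is the case $i\in e$, where we must observe that the Whitney form vanishes on $g$ even though $g$ may contain the other vertices of $e$. The term-by-term argument above handles this cleanly.
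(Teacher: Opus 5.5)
Your proposal is correct and follows the paper's proof exactly: pick a vertex of $f$ not in $g$, and split according to whether it lies in $f\setminus e$ (so $b_{f\setminus e}$ carries the factor $\lambda_i$) or in $e$ (so every term of $w_e$ carries $\lambda_i$ or $\dd\lambda_i$). Your separate check for $k=0$ is harmless but not needed, since the term-by-term argument already covers $w_e=\lambda_{e(0)}$.
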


\begin{proof}
Choose a vertex $j\in f\setminus g$. If $j\in f\setminus e$, then $b_{f\setminus e}$ contains $\lambda_j$, so its restriction to $g$ vanishes. If $j\in e$, every term in $w_e$ contains either $\lambda_j$ or $\dd\lambda_j$. Since
\[
\lambda_j|_g=0,
\qquad
{\tr}_g\dd\lambda_j
=
\dd_g(\lambda_j|_g)
=
0,
\]
we have ${\tr}_g w_e=0$. In either case, the product has zero trace on $g$.
\end{proof}

\begin{corollary}\label{cor:trbubble}
Let $f\in\Delta_\ell(T)$ and $e\in\Delta_k(f)$ with $k<\ell$. Then:
\begin{enumerate}
\item ${\tr}_{\partial f}(b_{f\setminus e}w_e)=0$;
\item ${\tr}_g(b_{f\setminus e}w_e)=0$ for every $g\in\Delta_\ell(T)$ with $g\neq f$;
\item ${\tr}_g(b_{f\setminus e}w_e)=0$ for every $g\in\Delta(T)$ with $k\leq\dim g<\ell$.
\end{enumerate}
\end{corollary}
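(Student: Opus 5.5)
All three items follow from Lemma~\ref{lem:facebubbletrace}, which gives ${\tr}_g(b_{f\setminus e}w_e)=0$ whenever $\dim g\geq k$ and $f\nsubseteq g$. For each item, I would check that every face $g$ involved satisfies both hypotheses. Faces with $\dim g<k$ need no argument, because $k$-forms on such faces vanish by the convention that form spaces outside $0\leq k\leq\dim g$ are zero.

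For (1), the boundary $\partial f$ is the union of the facets of $f$, that is, the $(\ell-1)$-faces $g\subset f$. Since $k<\ell$, each such facet has $\dim g=\ell-1\geq k$. It is a proper subset of $f$, so $f\nsubseteq g$, and Lemma~\ref{lem:facebubbletrace} gives a vanishing trace on each facet. The trace on lower-dimensional subfaces of $\partial f$ then vanishes as well, either because it factors through a facet or by the same lemma. Hence ${\tr}_{\partial f}(b_{f\setminus e}w_e)=0$.

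For (2), let $g\in\Delta_\ell(T)$ with $g\neq f$. Both faces have $\ell+1$ vertices, so $f\subseteq g$ would force $f=g$; hence $f\nsubseteq g$. Also $\dim g=\ell>k$, so the lemma applies.

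For (3), let $k\leq\dim g<\ell$. Then $g$ has fewer vertices than $f$, so $f\nsubseteq g$, and the lemma applies directly.

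The only point needing care is the meaning of ${\tr}_{\partial f}$ in (1). I would read it facet by facet, as in Lemma~\ref{lem:bubble-dlambda}. Beyond that, the argument is pure vertex-counting bookkeeping, and I expect no real obstacle.
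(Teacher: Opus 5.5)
Your proposal is correct and follows the paper's own proof: the paper likewise notes that no face $g$ in the three cases contains $f$ and then applies Lemma~\ref{lem:facebubbletrace}. Your vertex-counting checks of $\dim g\geq k$ and $f\nsubseteq g$ in each case simply spell out that one-line argument.
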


\begin{proof}
Each subsimplex $g$ under consideration does not contain $f$. Apply Lemma~\ref{lem:facebubbletrace}.
\end{proof}

If $F\in\Delta(T)$ contains $f$, however, then
${\tr}_F\bigl(b_{f\setminus e}w_e\bigr)\neq0.$
Indeed, restricting further to $f$ gives the nonzero form
$b_{f\setminus e}w_e^f.$
These trace properties explain why $b_{f\setminus e}w_e$ is associated with $f$. 

There are $\binom{\ell}{k}$ faces $e\in\Delta_k(f)$ containing $f(0)$. Therefore,
\begin{equation}\label{eq:dimBf-}
\dim\mathbb B_{r+1}^-\Lambda^k(T;f)
=
\dim\mathbb B_{r+1}^-\Lambda^k(f)
=
\binom{\ell}{k}\binom{r+k}{\ell}.
\end{equation}
In particular, $\mathbb B_{r+1}^-\Lambda^k(T;f)=\{0\}$ when $\ell>r+k$. Thus, nonzero bubble $k$-forms on an $\ell$-dimensional face require polynomial order $r+1\geq\ell-k+1$. For interior bubbles, where $\ell=d$, the minimum order is $d-k+1$.

\subsection{Geometric decomposition}
The face blocks now combine to give the full trimmed space. Their trace structure proves directness, and a dimension count proves that they span.
\begin{theorem}[Geometric decomposition;~\cite{ArnoldFalkWinther2009}]\label{th:trimdec}
For $r\geq0$ and $0\leq k\leq d$,
\begin{equation}\label{eq:trimgeodecomp}
\begin{aligned}
\mathbb P_{r+1}^-\Lambda^k(T)&=\Oplus_{\ell=k}^{d}\Oplus_{f\in\Delta_\ell(T)}\mathbb B_{r+1}^-\Lambda^k(T;f)\\
& = \Oplus_{\ell=k}^{d}\Oplus_{f\in\Delta_\ell(T)}\Oplus_{e\in\Delta_k^0(f)}
\mathbb P_{r+k-\ell}(f)b_{f\setminus e}w_e.
\end{aligned}
\end{equation}
In particular, when $r=0$,
\[
\mathbb P_1^-\Lambda^k(T)
=
\bigoplus_{f\in\Delta_k(T)}\mathbb R\,w_f,
\]
which is the lowest-order space of Whitney forms.
\end{theorem}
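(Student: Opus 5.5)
The plan follows the announcement before the theorem: prove that the sum is direct using the trace structure, and then show by a dimension count that it exhausts $\mathbb P_{r+1}^-\Lambda^k(T)$. Inclusion of every block in $\mathbb P_{r+1}^-\Lambda^k(T)$ was already shown after \eqref{eq:B-} via Lemma~\ref{lm:qwf}. Inside a single block $\mathbb B_{r+1}^-\Lambda^k(T;f)$, the sum over $e\in\Delta_k^0(f)$ is direct by \eqref{eq:tracebijectionB}. That isomorphism rests on Lemma~\ref{lm:wfbasis} applied on $f$: on the face, $b_{f\setminus e}w_e^f=b_{f\setminus e}w_e$ written intrinsically, which is the weighted Whitney form on $f$.

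\textbf{Directness across faces.} Suppose $\sum_{\ell=k}^d\sum_{f\in\Delta_\ell(T)}\omega_f=0$ with $\omega_f\in\mathbb B_{r+1}^-\Lambda^k(T;f)$. Induct on $\ell$ upward from $\ell=k$. Assume $\omega_g=0$ for all faces $g$ of dimension $<\ell$, and fix $f\in\Delta_\ell(T)$. Apply $\tr_f$ to the relation. Terms from faces of dimension $>\ell$ vanish by Corollary~\ref{cor:trbubble}(3), since $f$ does not contain them. Terms from other $\ell$-faces vanish by Corollary~\ref{cor:trbubble}(2). For $\ell=k$ the same conclusion follows directly from Lemma~\ref{lem:facebubbletrace}, since $f'\nsubseteq f$ when $f'\neq f$ and $\dim f'=k$. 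Only $\tr_f\omega_f=0$ survives, and injectivity in \eqref{eq:tracebijectionB} gives $\omega_f=0$.

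\textbf{Spanning by dimension count.} By \eqref{eq:dimBf-}, the sum has dimension
\[
\sum_{\ell=k}^d\binom{d+1}{\ell+1}\binom{\ell}{k}\binom{r+k}{\ell}.
\]
This must equal the known value $\dim\mathbb P_{r+1}^-\Lambda^k(T)=\binom{r+d}{r+k}\binom{r+k}{k}$ from \cite{ArnoldFalkWinthe2006Finite}. The combinatorial identity is the main obstacle. I would avoid proving it by hand and instead count directly. Each term $\binom{r+k}{\ell}\binom{\ell}{k}$ equals $\binom{r+k}{k}\binom{r}{\ell-k}$, so the sum becomes
\[
\binom{r+k}{k}\sum_{j\ge0}\binom{d+1}{k+1+j}\binom{r}{j}.
\]
By Vandermonde, $\sum_j\binom{d+1}{d-k-j}\binom{r}{j}=\binom{r+d+1}{d-k}$. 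This is off by one from the target, which suggests a bookkeeping error. I would therefore recheck the claimed target against $\binom{r+d}{d-k}$ carefully, with the index-shift conventions made explicit; a small case such as $k=0$ or $r=0$ can be used to calibrate.

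If the closed-form identity proves troublesome, the fallback is an induction that avoids it. Use the bubble characterization $\mathbb B_{r+1}^-\Lambda^k(T)=\mathbb P^-_{r+1}\Lambda^k(T)\cap\ker\tr_{\partial T}$, whose dimension is $\binom dk\binom{r+k}d$, on each face, together with the surjectivity of traces \eqref{eq:trimmed-face-trace}. Subtract the interpolated face contributions in order of increasing dimension; what remains at the end is an interior bubble. With either route, directness plus equal dimension yields \eqref{eq:trimgeodecomp}. The case $r=0$ is then immediate, since $\mathbb P_{k-\ell}(f)=0$ for $\ell>k$ and $\mathbb P_0(f)=\mathbb R$, $b_\varnothing=1$ for $\ell=k$.
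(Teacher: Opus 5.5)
Your directness argument is the paper's: the trace array is block lower-triangular by Lemma~\ref{lem:facebubbletrace}, you use forward substitution in increasing face dimension, and \eqref{eq:tracebijectionB} is injective on the diagonal. Your separate handling of $\ell=k$ is correct, since Corollary~\ref{cor:trbubble} assumes $k<\ell$. The spanning step is also the paper's, and your computation in fact completes it.

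The gap is that you stop, believing there is an off-by-one mismatch. The error is in the target you quoted, not in your count. The formula $\dim\mathcal P_r^-\Lambda^k(\mathbb R^d)=\binom{r+d}{r+k}\binom{r+k-1}{k}$ must be shifted $r\mapsto r+1$ in both factors, so
\[
\dim\mathbb P_{r+1}^-\Lambda^k(T)=\binom{r+d+1}{r+k+1}\binom{r+k}{k}.
\]
By contrast, $\binom{r+d}{r+k}\binom{r+k}{k}=\binom{r+d}{d}\binom dk$ is $\dim\mathbb P_r\Lambda^k(T)$. The calibration you suggest exposes this at once: for $r=k=0$ your target gives $1$, not $\dim\mathbb P_1(T)=d+1$. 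Since $\binom{r+d+1}{d-k}=\binom{r+d+1}{r+k+1}$, your Vandermonde value $\binom{r+k}{k}\binom{r+d+1}{d-k}$ is exactly the required dimension. This is the identity in the paper's proof, written there with $\binom{r}{r+k-\ell}$ in place of $\binom{r}{\ell-k}$.

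You should drop the fallback, because it is circular here. The equality $\mathbb B_{r+1}^-\Lambda^k(f)=\mathbb P_{r+1}^-\Lambda^k(f)\cap\ker\tr_{\partial f}$ is Corollary~\ref{cor:trimbubbleequality}, which the paper deduces from this very theorem. Before the theorem, only the inclusion \eqref{eq:trimbubbleformk0} is available. Also, $\binom dk\binom{r+k}d$ is the dimension of the rooted span in \eqref{eq:dimB-}, not a known dimension of the trace-free subspace.

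Your subtraction scheme would therefore stall at the first face where the residual trace is trace-free but not yet known to lie in the intrinsic bubble block. To make that route work you would need an independent upper bound on $\dim\bigl(\mathbb P_{r+1}^-\Lambda^k(T)\cap\ker\tr_{\partial T}\bigr)$. One source is the Arnold--Falk--Winther unisolvence argument: a trace-free trimmed form whose wedge pairing with every $\eta\in\mathbb P_{r+k-d}\Lambda^{d-k}(T)$ vanishes is zero. The direct dimension count makes this unnecessary.
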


\begin{proof}
For $k=d$, the right-hand side of~\eqref{eq:trimgeodecomp} consists only of
the element-interior block
$\mathbb B_{r+1}^-\Lambda^d(T;T)=\mathbb P_r(T)w_T
=\mathbb P_{r+1}^-\Lambda^d(T)$, so the result is immediate. We therefore
assume $0\leq k\leq d-1$.

Use the face traces as form-valued degrees of freedom and order the trace rows and face blocks by increasing face dimension, with the same face order within each dimension. Lemma~\ref{lem:facebubbletrace} gives
\begin{equation}\label{eq:lowertriangular}
\left.\tr_g\right|_{\mathbb B_{r+1}^-\Lambda^k(T;f)}=0
\qquad\text{if }f\nsubseteq g.
\end{equation}
Thus the trace maps form a block lower-triangular array. The diagonal maps
\[
\left.\tr_f\right|_{\mathbb B_{r+1}^-\Lambda^k(T;f)}:
\mathbb B_{r+1}^-\Lambda^k(T;f)
\longrightarrow\mathbb B_{r+1}^-\Lambda^k(f)
\]
are isomorphisms by~\eqref{eq:tracebijectionB}.

Suppose $\sum_f\omega_f=0$, where $\omega_f\in\mathbb B_{r+1}^-\Lambda^k(T;f)$. Taking traces gives
\[
0=\tr_g\Bigl(\sum_f\omega_f\Bigr)
=\sum_{f\subseteq g}\tr_g\omega_f.
\]
Starting with the $k$-faces, proceed in increasing face dimension. Once all lower-dimensional components vanish, the equation reduces to $\tr_g\omega_g=0$. Injectivity of the diagonal trace map gives $\omega_g=0$. Hence the sum is direct.

It remains to compare dimensions. By~\eqref{eq:dimBf-},
\begin{align*}
\sum_{\ell=k}^d\binom{d+1}{\ell+1}\binom{r+k}{\ell}\binom{\ell}{k}
&=\binom{r+k}{k}\sum_{\ell=k}^d\binom{d+1}{\ell+1}\binom{r}{r+k-\ell}\\
&=\binom{r+d+1}{r+k+1}\binom{r+k}{k} =\dim\mathbb P_{r+1}^-\Lambda^k(T).
\end{align*}
The second identity is Vandermonde's identity, obtained by comparing the coefficient of $x^{r+k+1}$ in
\[
(1+x)^{d+1}(1+x)^r=(1+x)^{r+d+1}.
\]
The last equality uses the standard dimension formula for trimmed forms~\cite[(3.7)]{ArnoldFalkWinther2009}, with order $r+1$.
\end{proof}

The same triangular structure identifies the interior block: zero boundary trace eliminates every proper-face component.

\begin{corollary}\label{cor:trimbubbleequality}
For $r\geq0$ and $0\leq k\leq d$,
\begin{equation}\label{eq:trimbubbleequality}
\mathbb B_{r+1}^-\Lambda^k(T)=\mathbb P_{r+1}^-\Lambda^k(T)\cap\ker(\tr_{\partial T}).
\end{equation}
\end{corollary}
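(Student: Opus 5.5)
The inclusion $\mathbb B_{r+1}^-\Lambda^k(T)\subseteq\mathbb P_{r+1}^-\Lambda^k(T)\cap\ker(\tr_{\partial T})$ is already recorded in~\eqref{eq:trimbubbleformk0} for $0\leq k\leq d-1$. For $k=d$ both sides equal $\mathbb P_r\Lambda^d(T)$, since the trace of a $d$-form on a lower-dimensional face vanishes. So I only need the reverse inclusion for $0\leq k\leq d-1$, and I will derive it from Theorem~\ref{th:trimdec} using the same triangular trace argument.

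Let $\omega\in\mathbb P_{r+1}^-\Lambda^k(T)$ with $\tr_{\partial T}\omega=0$. By~\eqref{eq:trimgeodecomp}, write $\omega=\sum_{f}\omega_f$ with $\omega_f\in\mathbb B_{r+1}^-\Lambda^k(T;f)$, where $f$ ranges over faces of dimension $\ell$ with $k\leq\ell\leq d$. Every proper face $g$ with $\dim g\geq k$ lies in $\partial T$, so $\tr_g\omega=0$. I then argue by induction on $\dim g$, from $k$ up to $d-1$, that $\omega_g=0$. By~\eqref{eq:lowertriangular}, $\tr_g\omega=\sum_{f\subseteq g}\tr_g\omega_f$. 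If all components on faces of dimension less than $\dim g$ already vanish, this sum reduces to $\tr_g\omega_g=0$. Injectivity of $\tr_g$ on $\mathbb B_{r+1}^-\Lambda^k(T;g)$, from~\eqref{eq:tracebijectionB}, then gives $\omega_g=0$.

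After the induction, only the interior block remains: $\omega=\omega_T\in\mathbb B_{r+1}^-\Lambda^k(T;T)$. Taking $f=T$ in~\eqref{eq:B-} gives $\ell=d$, $b_{T\setminus e}=b_{e^*}$ and root $0$, so this block is exactly the space $\mathbb B_{r+1}^-\Lambda^k(T)$ defined in~\eqref{eq:BT-}. That proves the reverse inclusion.

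There is no real obstacle, since directness and the face-by-face trace structure were established in the proof of Theorem~\ref{th:trimdec}. The one point to check is that~\eqref{eq:lowertriangular} is applied only to faces $g\neq T$ of dimension at least $k$, which lie in $\partial T$, so that the hypothesis $\tr_{\partial T}\omega=0$ forces $\tr_g\omega=0$ on them.
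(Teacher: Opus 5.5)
Your proposal is correct and follows essentially the same route as the paper: you dispose of $k=d$ separately, take one inclusion from~\eqref{eq:trimbubbleformk0}, and for the converse expand $\omega$ via~\eqref{eq:trimgeodecomp}, eliminate every proper-face component by the forward substitution from the proof of Theorem~\ref{th:trimdec}, and identify the remaining block with $\mathbb B_{r+1}^-\Lambda^k(T)$. The one step the paper spells out more explicitly is that $\tr_g\omega=0$ for a proper face $g$, which it obtains from transitivity of traces, $\tr_g\omega=\tr_g^F(\tr_F\omega)$, for a facet $F\supseteq g$; your remark that $g$ lies in $\partial T$ is the same point left implicit.
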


\begin{proof}
For $k=d$, both sides of~\eqref{eq:trimbubbleequality} equal
$\mathbb P_r\Lambda^d(T)$, because the boundary trace of a top-degree form
vanishes automatically. We therefore assume $0\leq k\leq d-1$.

The inclusion from left to right is~\eqref{eq:trimbubbleformk0}. Conversely, let $\omega\in\mathbb P_{r+1}^-\Lambda^k(T)$ have zero boundary trace and write $\omega=\sum_f\omega_f$ by~\eqref{eq:trimgeodecomp}. For every proper face $g$ with $\dim g\geq k$, choose a facet $F$ containing $g$. Transitivity of traces gives
\[
\tr_g\omega=\tr_g^F(\tr_F\omega)=0,
\]
where $\tr_g^F$ is the trace from $F$ to $g$. Hence
\[
0=\tr_g\Bigl(\sum_f\omega_f\Bigr)
=\sum_{f\subseteq g}\tr_{g}\omega_f.
\]
The same forward substitution as in the proof of Theorem~\ref{th:trimdec} gives $\omega_f=0$ for every proper face $f$. Thus $\omega=\omega_T\in\mathbb B_{r+1}^-\Lambda^k(T)$ by~\eqref{eq:B-} and~\eqref{eq:BT-}.
\end{proof}

\section{Geometric duality of polynomial bubble spaces}\label{sec:rooted-bubbles}

The trimmed bubble basis is indexed by rooted Whitney forms. We now express the full bubble basis using complementary barycentric forms. A weighted Hodge star maps these generators to polynomial forms of complementary degree and gives nondegenerate integral pairings. 

\subsection{Logarithmic coordinates and the barycentric metric}
\label{sec:barycentric-metric}

We work on $\mathring T$, where all barycentric coordinates are positive,
and use the orientation
$\Omega_\lambda=\dd\lambda_1\wedge\cdots\wedge\dd\lambda_d$.
Recall $z_i=\log(\lambda_i/\lambda_0)$ from
Section~\ref{sec:bubblePolyForm}. These logarithmic coordinates are the
partial derivatives of the negative entropy
\[
E(\lambda_1,\ldots,\lambda_d)=\sum_{i=0}^d\lambda_i\log\lambda_i,
\qquad \lambda_0=1-\sum_{i=1}^d\lambda_i.
\]
Writing $\boldsymbol\lambda=(\lambda_1,\ldots,\lambda_d)^{\mathsf T}$
and $\boldsymbol z=(z_1,\ldots,z_d)^{\mathsf T}$, we have
\[
\boldsymbol z=\nabla_{\boldsymbol\lambda}E,
\qquad
\dd\boldsymbol z=\mathsf M\dd\boldsymbol\lambda, 
\qquad
\mathsf M=D^{-1}+\lambda_0^{-1}\mathbf1\mathbf1^{\mathsf T},
\]
where $D=\operatorname{diag}(\lambda_1,\ldots,\lambda_d)$ and
$\mathbf1$ is the column of ones. Indeed, for $i=1, \ldots, d$,
\[
\frac{\partial E}{\partial\lambda_i}= \log(\lambda_i/\lambda_0) = z_i,
\qquad
\mathsf M_{ij} = \frac{\partial^2E}{\partial\lambda_j\partial\lambda_i}
=\frac{\delta_{ij}}{\lambda_i}+\frac1{\lambda_0}.
\]
The Hessian defines the barycentric metric
\begin{equation}\label{eq:barycentric-metric}
M=\sum_{i=0}^d\frac{\dd\lambda_i\otimes\dd\lambda_i}{\lambda_i}.
\end{equation}
To express it as a matrix, use the affine tangent basis
$\boldsymbol t_i=\texttt v_i-\texttt v_0$, $i=1,\ldots,d$. Since
$\dd\lambda_j(\boldsymbol  t_i)=\delta_{ji}$ for $j\geq1$ and
$\dd\lambda_0(\boldsymbol t_i)=-1$, its matrix is
\[
\mathsf M_{ij}:=M( \boldsymbol t_i, \boldsymbol t_j)
=\frac{\delta_{ij}}{\lambda_i}+\frac1{\lambda_0},
\]
The inverse metric
acts on covectors: if $\alpha=\sum_i a_i\dd\lambda_i$ and
$\beta=\sum_i b_i\dd\lambda_i$ have coefficient columns $a,b$, then
\[
\langle\alpha,\beta\rangle_{M^{-1}}
=a^{\mathsf T}\mathsf M^{-1}b.
\]

\begin{lemma}[Metric duality]\label{lm:barycentric-metric}
The metric $M$ is positive definite on $\mathring T$, and
\[
\mathsf M^{-1}=D-\boldsymbol\lambda\boldsymbol\lambda^{\mathsf T},
\qquad \det\mathsf M=b_T^{-1}.
\]
The logarithmic and barycentric covectors are $M$-dual:
\begin{equation}\label{eq:metric-covector-duality}
\langle\dd\lambda_j,\dd z_i\rangle_{M^{-1}}=\delta_{ji}.
\end{equation}
\end{lemma}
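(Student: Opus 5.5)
The statement has three parts: the inverse formula, the determinant, and the duality \eqref{eq:metric-covector-duality}. Positive definiteness comes directly from \eqref{eq:barycentric-metric}. On $\mathring T$ every $\lambda_i>0$, so for a tangent vector $\boldsymbol v$ we get $M(\boldsymbol v,\boldsymbol v)=\sum_{i=0}^d \dd\lambda_i(\boldsymbol v)^2/\lambda_i\geq0$. Equality forces $\dd\lambda_i(\boldsymbol v)=0$ for $i=1,\ldots,d$, and hence $\boldsymbol v=0$, because $\dd\lambda_1,\ldots,\dd\lambda_d$ form a basis of $\alt^1(\mathscr T^T)$. The same conclusion also follows from $\mathsf M=D^{-1}+\lambda_0^{-1}\mathbf 1\mathbf 1^{\mathsf T}$, which is a positive diagonal matrix plus a positive semidefinite rank-one term.

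For the inverse, I will verify the product directly rather than invoke Sherman--Morrison. Using $D^{-1}\boldsymbol\lambda=\mathbf 1$, $\mathbf 1^{\mathsf T}D=\boldsymbol\lambda^{\mathsf T}$ and $\mathbf 1^{\mathsf T}\boldsymbol\lambda=1-\lambda_0$, I compute
\[
(D^{-1}+\lambda_0^{-1}\mathbf1\mathbf1^{\mathsf T})(D-\boldsymbol\lambda\boldsymbol\lambda^{\mathsf T})
=I-\mathbf1\boldsymbol\lambda^{\mathsf T}+\lambda_0^{-1}\mathbf1\boldsymbol\lambda^{\mathsf T}-\lambda_0^{-1}(1-\lambda_0)\mathbf1\boldsymbol\lambda^{\mathsf T}.
\]
The three rank-one terms cancel, so the product is $I$. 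For the determinant, the matrix determinant lemma gives
\[
\det\mathsf M=\det D^{-1}\bigl(1+\lambda_0^{-1}\mathbf1^{\mathsf T}D\mathbf1\bigr)=\Bigl(\prod_{i=1}^d\lambda_i^{-1}\Bigr)\frac{\lambda_0+\sum_{i\ge1}\lambda_i}{\lambda_0}.
\]
Since $\sum_{i=0}^d\lambda_i=1$, this equals $b_T^{-1}$.

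For the duality, I write covectors in the basis $\dd\lambda_1,\ldots,\dd\lambda_d$. Then $\dd\lambda_j$ has coefficient column $\boldsymbol e_j$ for $j\geq1$. The relation $\dd\boldsymbol z=\mathsf M\dd\boldsymbol\lambda$ says that the coefficient column of $\dd z_i$ is the $i$-th column of $\mathsf M$ (using symmetry). Therefore
\[
\langle\dd\lambda_j,\dd z_i\rangle_{M^{-1}}=\boldsymbol e_j^{\mathsf T}\mathsf M^{-1}\mathsf M\boldsymbol e_i=\delta_{ji}.
\]
The one point needing care is the index range. The statement presumably means $1\leq i,j\leq d$, since $z_i$ is defined only for $i\geq1$. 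If $j=0$ is also intended, I will use $\dd\lambda_0=-\sum_{j\ge1}\dd\lambda_j$, which gives $\langle\dd\lambda_0,\dd z_i\rangle_{M^{-1}}=-1$; so the claim should be read with $j\geq1$.

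I also need to check that the inverse metric on covectors really has matrix $\mathsf M^{-1}$ in the $\dd\lambda$ coordinates. This holds because $\{\dd\lambda_j\}_{j\ge1}$ is the dual basis of $\{\boldsymbol t_i\}$, and the Gram matrix of $M$ in the basis $\boldsymbol t_i$ is $\mathsf M$. Everything here is routine; the main place to be careful is the bookkeeping of $\lambda_0$ through the relation $\sum_{i=0}^d\lambda_i=1$.
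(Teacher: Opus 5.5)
Your proposal is correct and follows the paper's proof essentially step for step: the same positivity argument, the same direct multiplication using $D^{-1}\boldsymbol\lambda=\mathbf 1$ and $\mathbf 1^{\mathsf T}\boldsymbol\lambda=1-\lambda_0$, the rank-one determinant formula, and the computation $\boldsymbol e_j^{\mathsf T}\mathsf M^{-1}\mathsf M\boldsymbol e_i=\delta_{ji}$. Your observation that the duality holds for $1\leq i,j\leq d$ (since $\langle\dd\lambda_0,\dd z_i\rangle_{M^{-1}}=-1$) is a correct clarification of what the paper leaves implicit.
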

\begin{proof}
For a nonzero tangent vector $u$, some $\dd\lambda_i(u)$ is nonzero,
so $M(u,u)>0$. Using $D^{-1}\boldsymbol\lambda=\mathbf1$ and
$\mathbf1^{\mathsf T}\boldsymbol\lambda=1-\lambda_0$, multiplication gives
\[
\mathsf M(D-\boldsymbol\lambda\boldsymbol\lambda^{\mathsf T})
=I+\bigl[-1+\lambda_0^{-1}-\lambda_0^{-1}(1-\lambda_0)\bigr]
\mathbf1\boldsymbol\lambda^{\mathsf T}=I.
\]
The determinant formula for a rank-one update gives
\[
\det\mathsf M
=\left(\prod_{i=1}^d\lambda_i^{-1}\right)
\left(1+\frac{\sum_{i=1}^d\lambda_i}{\lambda_0}\right)=b_T^{-1}.
\]
Finally, $\dd z_i=\sum_j\mathsf M_{ij}\dd\lambda_j$. Its coefficient
column is $\mathsf M\boldsymbol e_i$, since $\mathsf M$ is symmetric.
Hence
\[
\langle\dd\lambda_j,\dd z_i\rangle_{M^{-1}}
=\boldsymbol e_j^{\mathsf T}\mathsf M^{-1}\mathsf M\boldsymbol e_i
=\delta_{ji}.
\]
\end{proof}

The covector inner product extends to exterior forms by
\[
\left\langle\alpha_1\wedge\cdots\wedge\alpha_k,
\beta_1\wedge\cdots\wedge\beta_k\right\rangle_{M^{-1}}
=\det\bigl(\langle\alpha_i,\beta_j\rangle_{M^{-1}}\bigr)_{i,j=1}^k.
\]
Thus the duality extends to exterior forms: for increasing $k$-tuples
$\sigma,\tau$ in $\{1,\ldots,d\}$,
\[
\langle\dd\lambda_\sigma,\dd z_\tau\rangle_{M^{-1}}
=\delta_{\sigma\tau}.
\]
The metric volume form is
\begin{equation}\label{eq:metric-volume}
\mathrm{vol}_M
=\sqrt{\det\mathsf M}\,\Omega_\lambda
=b_T^{-1/2}\Omega_\lambda.
\end{equation}
%
Set $u_i=\sqrt{\lambda_i}$. Then $\sum_i u_i^2=1$, and the metric becomes
$4\sum_i\dd u_i\otimes\dd u_i$ on the positive orthant of the unit sphere.
Consequently,
\[
\int_T\mathrm{vol}_M
=2^d\frac{|S^d|}{2^{d+1}}
=\frac{\pi^{(d+1)/2}}{\Gamma((d+1)/2)}<\infty.
\]
Although the metric coefficients diverge at the boundary, its volume
density is integrable.

\subsection{The Hodge star on bubble generators}

The Hodge star $\star_M:\Lambda^k\to\Lambda^{d-k}$ is defined by
\begin{equation}\label{eq:metric-star-definition}
\alpha\wedge\star_M\beta
=\langle\alpha,\beta\rangle_{M^{-1}}\mathrm{vol}_M.
\end{equation}
It satisfies $\star_M^2=(-1)^{k(d-k)}\operatorname{id}$.
Order $e^*$ increasingly and write $\pm_e$ for the sign of the
permutation $[e,e^*]$ of $[0,\ldots,d]$.

\begin{lemma}[Hodge star on bubble generators]\label{lm:weighted-hodge-duality}
Let $0\leq k\leq d$ and $k^*=d-k$. On $\mathring T$,
\begin{align}
\label{eq:trimmed-bubble-dual-map}
b_T^{-1/2}\star_M(b_{e^*}w_e)
&=\pm_e\dd\lambda_{e^*},&&e\in\Delta_k(T),\\
\label{eq:full-bubble-dual-map}
b_T^{-1/2}\star_M(b_e\dd\lambda_{e^*})
&=(-1)^{kk^*}\pm_e w_e,&&e\in\Delta_{k^*}(T).
\end{align}
Scalar factors commute with these maps. Moreover,
\begin{equation}\label{eq:weighted-hodge-positive}
\omega\wedge\bigl(b_T^{-1/2}\star_M\omega\bigr)
=\frac{|\omega|_{M^{-1}}^2}{b_T}\Omega_\lambda.
\end{equation}
\end{lemma}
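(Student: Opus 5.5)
Our plan is to reduce everything to the dual coframes $\{\dd\lambda_j\}_{j=1}^d$ and $\{\dd z_i\}_{i=1}^d$ of Lemma~\ref{lm:barycentric-metric}, after first treating root-$0$ faces, where the logarithmic representation \eqref{eq:whitney-log} is available. Consider $e\in\Delta_k^0(T)$, so $e=[0,e(1),\ldots,e(k)]$ and $e^*\subseteq\{1,\ldots,d\}$. Lemma~\ref{lm:whitney-log} gives $b_{e^*}w_e=b_T\,\dd z_{e(1:k)}$. We then compute $\star_M\dd z_\sigma$ for $\sigma=e(1:k)$ by testing the defining relation \eqref{eq:metric-star-definition} against the basis $\dd\lambda_\tau$. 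Since $\langle\dd\lambda_\tau,\dd z_\sigma\rangle_{M^{-1}}=\delta_{\tau\sigma}$, we obtain $\dd\lambda_\tau\wedge\star_M\dd z_\sigma=\delta_{\tau\sigma}b_T^{-1/2}\Omega_\lambda$. Hence
\[
\star_M\dd z_\sigma=\epsilon\, b_T^{-1/2}\dd\lambda_{\sigma^c},
\]
where $\sigma^c=e^*$ and $\epsilon$ is the sign defined by $\dd\lambda_\sigma\wedge\dd\lambda_{e^*}=\epsilon\,\Omega_\lambda$. This sign equals the sign of the permutation $[e,e^*]$ of $[0,\ldots,d]$, because the vertex $0$ sits first in both orderings; so $\epsilon=\pm_e$. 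Multiplying by $b_T$ and then by $b_T^{-1/2}$ yields \eqref{eq:trimmed-bubble-dual-map} for rooted faces. Scalar factors commute with these maps because $\star_M$ is $C^\infty$-linear pointwise.

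The second identity \eqref{eq:full-bubble-dual-map} will follow by applying $\star_M$ once more and using $\star_M^2=(-1)^{kk^*}$. Fix $e\in\Delta_{k^*}(T)$ and set $f=e^*\in\Delta_k$, with roles exchanged. Write $\eta=b_e\dd\lambda_{f}$, where $f$ has $k$ vertices, i.e., is a $(k-1)$-face. The relation $b_{e^*}w_e\mapsto\pm_e\dd\lambda_{e^*}$ applied to the $k^*$-form side reads
\[
\star_M(b_{e^*}w_e)=\pm_e b_T^{1/2}\dd\lambda_{e^*}.
\]
Applying $\star_M$ again and dividing by $b_{e^*}$ gives $\star_M(\dd\lambda_{e^*})=(-1)^{kk^*}\pm_e b_T^{-1/2}b_{e^*}^{-1}\cdot b_{e^*}w_e\cdot$, up to the correct power. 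Concretely, $\star_M(b_e\dd\lambda_{e^*})=(-1)^{kk^*}\pm_e b_T^{1/2}w_e$. The degree bookkeeping and the matching of $(-1)^{kk^*}$ with the form degrees must be checked carefully.

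The main obstacle is removing the root-$0$ restriction. Two approaches are available. The first is to note that $M$, $\mathrm{vol}_M$ up to orientation, and $b_T$ are symmetric in all $d+1$ indices. Relabeling the vertices so that a vertex of $e$ becomes $0$ (or $e(0)$ in general) changes $\Omega_\lambda$ by the sign of that relabeling, and one must verify that this sign combines with the reordering to produce exactly the permutation sign $\pm_e$. The alternative, which we would try first if the bookkeeping gets messy, is to redo the computation with logarithmic coordinates $\log(\lambda_i/\lambda_{e(0)})$ rooted at $e(0)$; Lemma~\ref{lm:whitney-log} holds verbatim for that root. For $k=0$, the Whitney form is $\lambda_{e(0)}$ and \eqref{eq:trimmed-bubble-dual-map} reduces to $\star_M b_T=b_T\,\mathrm{vol}_M$, with the stated sign. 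The orientation-sign verification is where errors are most likely.

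Finally, \eqref{eq:weighted-hodge-positive} follows directly from the definition \eqref{eq:metric-star-definition} with $\alpha=\beta=\omega$ and \eqref{eq:metric-volume}:
\[
\omega\wedge b_T^{-1/2}\star_M\omega=b_T^{-1/2}|\omega|^2_{M^{-1}}b_T^{-1/2}\Omega_\lambda.
\]
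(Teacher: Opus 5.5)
Your proposal follows the paper's proof: identify $\star_M(b_T\,\dd z_{e(1:k)})$ for root-$0$ faces by testing the defining relation against the basis $\dd\lambda_\tau$, reroot for general $e$, obtain the second identity by applying $\star_M$ again with $\star_M^2=(-1)^{kk^*}$, and read off positivity from the definition of $\star_M$ together with $\mathrm{vol}_M=b_T^{-1/2}\Omega_\lambda$. Two small fixes: your intermediate line should read $\star_M(\dd\lambda_{e^*})=(-1)^{kk^*}\pm_e b_T^{-1/2}b_{e^*}w_e$, with no factor $b_{e^*}^{-1}$ (you then multiply by $b_e$ and use $b_eb_{e^*}=b_T$, rather than dividing by $b_{e^*}$), and the rerooted sign check does close, because with root $j=e(0)$ one has $\dd\lambda_{\{0,\ldots,d\}\setminus\{j\}}=(-1)^j\Omega_\lambda$, and $(-1)^j$ is exactly the sign of moving $j$ to the front of $[0,\ldots,d]$.
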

\begin{proof}
First take $e=[0,e(1),\ldots,e(k)]$ with the last $k$ vertices increasing,
and set $\tau=e(1:k)$. For every increasing $k$-tuple $\sigma$,
\[
\begin{aligned}
\dd\lambda_\sigma\wedge(\pm_e b_T^{1/2}\dd\lambda_{e^*})
&=b_T^{1/2}\delta_{\sigma\tau}\Omega_\lambda =\langle\dd\lambda_\sigma,b_T\dd z_\tau\rangle_{M^{-1}}\mathrm{vol}_M.
\end{aligned}
\]
By \eqref{eq:whitney-log}, this verifies the defining relation \eqref{eq:metric-star-definition}
for $\star_M(b_{e^*}w_e)$ and proves \eqref{eq:trimmed-bubble-dual-map}.
Choosing any vertex of $e$ as root gives the general formula, with
$\pm_e$ accounting for orientation.
For $e\in\Delta_{k^*}(T)$, apply \eqref{eq:trimmed-bubble-dual-map} in degree $k^*$ and
apply $\star_M$ again:
\[
(-1)^{kk^*}b_{e^*}w_e
=\pm_e b_T^{1/2}\star_M(\dd\lambda_{e^*}).
\]
Multiplication by $b_e/b_T$ proves \eqref{eq:full-bubble-dual-map}.
Taking $\alpha=\beta=\omega$ in \eqref{eq:metric-star-definition} and using
\eqref{eq:metric-volume} proves \eqref{eq:weighted-hodge-positive}.
\end{proof}

The normalization $b_T^{-1/2}$ cancels the square-root factor produced
by the Hodge star, leaving polynomial forms in both identities.

\subsection{Complementary-degree duality}\label{sec:bubble-duality}

We now apply the generator identities to the geometric decompositions.
Recall that $k^*=d-k$ is the complementary form degree.

\begin{theorem}[Bubble duality]\label{thm:bubble-duality}
Let $0\leq k<d$ and set $s=r-k^*$. The normalized Hodge star gives the
isomorphisms
\begin{equation}\label{eq:bubble-dualities}
\begin{aligned}
b_T^{-1/2}\star_M:\mathbb B_{r+1}^-\Lambda^k(T)
&\longrightarrow\mathbb P_s\Lambda^{k^*}(T),
&&r\geq k^*,\\
b_T^{-1/2}\star_M:\mathbb B_r\Lambda^k(T)
&\longrightarrow\mathbb P_s^-\Lambda^{k^*}(T),
&&r\geq k^*+1.
\end{aligned}
\end{equation}
Both pairs have nondegenerate wedge pairings. In particular,
\begin{equation}\label{eq:full-bubble-dof-duality}
\left.
\begin{aligned}
&\omega\in\mathbb B_r\Lambda^k(T),\\
&\int_T\omega\wedge\eta=0
\quad\text{for every }\eta\in\mathbb P_s^-\Lambda^{k^*}(T)
\end{aligned}
\right\}
\quad\Longrightarrow\quad\omega=0.
\end{equation}
The same implication holds for $\mathbb B_{r+1}^-\Lambda^k(T)$ paired
with $\mathbb P_s\Lambda^{k^*}(T)$.
\end{theorem}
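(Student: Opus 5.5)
The plan is to apply Lemma~\ref{lm:weighted-hodge-duality} generator by generator. Then a dimension count and the positivity identity~\eqref{eq:weighted-hodge-positive} give bijectivity and nondegeneracy.

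\emph{First isomorphism.} By~\eqref{eq:BT-}, every $\omega\in\mathbb B_{r+1}^-\Lambda^k(T)$ is $\sum_{e\in\Delta_k^0(T)}p_e b_{e^*}w_e$ with $p_e\in\mathbb P_{r+k-d}(T)=\mathbb P_s(T)$. Since scalar factors commute with the map, \eqref{eq:trimmed-bubble-dual-map} gives
\[
b_T^{-1/2}\star_M\omega=\sum_{e\in\Delta_k^0(T)}\pm_e\,p_e\,\dd\lambda_{e^*}.
\]
The forms $\dd\lambda_{e^*}$, $e\in\Delta_k^0(T)$, are a basis of $\alt^{k^*}(\mathscr T^T)$. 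Hence the image is exactly $\mathbb P_s\Lambda^{k^*}(T)$, and the map is injective. This identity holds on $\mathring T$, and both sides are polynomial, so it extends to $T$.

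\emph{Second isomorphism.} I will use the full bubble decomposition~\eqref{intro:BrT}, in which each generator has the form $p\,b_{f\setminus e}b_e\dd\lambda_{e^*}$ with $e\in\Delta_{k^*}^0(f)$ and $p\in\mathbb P_{r-\ell-1}(f)$ (Bernstein-extended). By~\eqref{eq:full-bubble-dual-map}, such a generator is mapped to $\pm\,p\,b_{f\setminus e}w_e$. The polynomial degree is $r-\ell-1=s+k^*-\ell-1$, which is exactly the coefficient degree of the block $\mathbb B_{s}^-\Lambda^{k^*}(T;f)$ in~\eqref{eq:B-}. Indeed, that block has trimmed order $s$, so its "$r$" is $s-1$ and its degree is $(s-1)+k^*-\ell$. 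Thus the map sends the block of~\eqref{intro:BrT} indexed by $(f,e)$ onto the block of the geometric decomposition~\eqref{eq:trimgeodecomp} for $\mathbb P_s^-\Lambda^{k^*}(T)$ indexed by the same $(f,e)$, for $\ell=k^*,\dots,d$. Both decompositions are direct, by~\eqref{intro:BrT} and Theorem~\ref{th:trimdec} with $s-1\ge0$. Therefore the map is a bijection. The main point to check here is the index bookkeeping: the two decompositions use the same rooted faces $e\in\Delta_{k^*}^0(f)$ with the same Bernstein supports, the signs $\pm_e$ and $(-1)^{kk^*}$ are harmless, and the assumption $s\ge1$ is exactly what is needed to invoke Theorem~\ref{th:trimdec}.

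\emph{Nondegeneracy.} Write $\Phi=b_T^{-1/2}\star_M$ and take a nonzero $\omega$ in either bubble space. Then $\eta:=\Phi\omega$ lies in the corresponding paired space by the isomorphisms above. By~\eqref{eq:weighted-hodge-positive},
\[
\int_T\omega\wedge\eta=\int_T\frac{|\omega|_{M^{-1}}^2}{b_T}\,\Omega_\lambda>0 .
\]
The integrand is a nonnegative rational function, so this integral is positive, possibly $+\infty$ a priori. In fact $\omega\wedge\eta$ is a polynomial $d$-form, so the integral is finite, and $|\omega|_{M^{-1}}$ is positive on an open set because $M$ is positive definite. This proves~\eqref{eq:full-bubble-dof-duality} and its trimmed analogue. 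Nondegeneracy in the other slot follows because the two spaces have equal dimension: by the bijections, $\dim$ of each bubble space equals $\dim$ of its paired space. A pairing between spaces of equal finite dimension that is injective on one side is therefore nondegenerate on both sides.
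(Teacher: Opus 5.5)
Your proposal is correct and follows the paper's proof essentially step by step. You apply Lemma~\ref{lm:weighted-hodge-duality} generator by generator to \eqref{eq:BT-} and to the blocks of \eqref{intro:BrT}, match the latter with the geometric decomposition of Theorem~\ref{th:trimdec} at order $s$ (using $s\geq1$), and deduce nondegeneracy from \eqref{eq:weighted-hodge-positive} with $\eta=b_T^{-1/2}\star_M\omega$; your equal-dimension argument for the second slot makes explicit a step the paper leaves implicit.
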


\begin{proof}
Applying \eqref{eq:trimmed-bubble-dual-map} to \eqref{eq:BT-} gives
$\mathbb P_s\Lambda^{k^*}(T)$, since the complementary forms
$\dd\lambda_{e^*}$ form a basis of constant $k^*$-forms.
For the full bubble space, \eqref{eq:full-bubble-dual-map} sends each
block of \eqref{intro:BrT} to the corresponding Whitney block. Since
$r-\ell-1=s+k^*-\ell-1$, its image is
\[
\Oplus_{\ell=k^*}^{d}\Oplus_{f\in\Delta_\ell(T)}
\Oplus_{e\in\Delta_{k^*}^0(f)}
\mathbb P_{s+k^*-\ell-1}(f)b_{f\setminus e}w_e
=\mathbb P_s^-\Lambda^{k^*}(T).
\]
The equality follows from \eqref{eq:B-} and Theorem~\ref{th:trimdec}.
Both decompositions are direct, and their scalar coefficient spaces and
form bases match, proving the isomorphisms.

Take $\eta =b_T^{-1/2}\star_M\omega$. By \eqref{eq:weighted-hodge-positive}, the integral $\int_T\omega\wedge\bigl(b_T^{-1/2}\star_M\omega\bigr) = 0$ implies $|\omega|_{M^{-1}}=0$ and consequently $\omega = 0$. 
%
\end{proof}

\begin{remark}[Earlier pointwise duality operators]
\label{rem:bubble-duality-sphere}\rm
Isaac constructed the simplex-based pointwise duality operator
$\mathring{\star}_{T,k}$~\cite[Lemma~3.7 and Theorems~3.8--3.9]{Isaac2022}.
Writing $\star_{M,k}$ for the restriction of $\star_M$ to $k$-forms,
comparison of the generator formulas, with consistent orientation and
normalization conventions, gives
\[
\mathring{\star}_{T,k}=b_T^{1/2}\star_{M,k}.
\]
Consequently, if $\mathcal D_k=b_T^{-1/2}\star_{M,k}$ denotes the
normalized Hodge star used above, then, on the corresponding polynomial
and bubble spaces,
\[
\mathcal D_k=(-1)^{k(d-k)}
\bigl(\mathring{\star}_{T,d-k}\bigr)^{-1}.
\]
Thus the maps in~\eqref{eq:bubble-dualities} realize Isaac's dualities in
reverse. The logarithmic-coordinate derivation above identifies this
operator with the Hodge star of the negative-entropy Hessian metric and
makes its action explicit on the rooted Whitney--bubble generators.

Let $\Phi:S^d_+\to\mathring T$ be given by $\lambda_i\circ\Phi=u_i^2$,
where $S^d_+=\{u_i>0:\sum_i u_i^2=1\}$. Since $\Phi^*M=4g_{S^d}$,
Berchenko--Kogan's map on $q$-forms~\cite[Corollary~3.3]{BerchenkoKogan2021}
is, with consistent orientations,
\[
B_q=(\Phi^*)^{-1}\circ\bigl((\textstyle\prod_i u_i)\star_{S^d}\bigr)\circ\Phi^*
=2^{2q-d}b_T^{1/2}\star_M.
\]
Thus
\[
B_{d-k}^{-1}=(-1)^{k(d-k)}2^{2k-d}b_T^{-1/2}\star_M.
\]
The dualities above are the same maps in reverse, up to this constant
and sign. Our generator formulas give a direct barycentric description
on each geometric face block. The positivity proof is the simplex
version of~\cite[Corollary~3.4]{BerchenkoKogan2021}.
\end{remark}

\section{A nodal Whitney--Lagrange basis for the trimmed family}
\label{sec:nodal-whitney-lagrange}

The preceding duality gives integral degrees of freedom for bubble spaces. We now construct a basis dual to tangential point evaluations. The same geometric decomposition supplies the face indices, while shifted Lagrange factors enforce cardinality across all nodes.

\subsection{A common simplicial lattice}
Using the notation of Section~\ref{sec:simplicial-lattices}, we place the nodal sets for $\mathbb P_{r+k-\ell}(f)$ on the common lattice $\mathbb T_m^d$, where
\begin{equation}\label{eq:m-common}
m:=r+k+1.
\end{equation}
The degree $m$ is independent of the face dimension for fixed $r$ and $k$.



For $f\in\Delta_\ell(T)$, let $\alpha_f=(\alpha_{f(0)},\ldots,\alpha_{f(\ell)})$ denote the local multi-index. The Bernstein representation is
\[
\mathbb P_m(f)=\operatorname{span}\{\lambda_f^{\alpha_f}:\alpha_f\in\mathbb T_m^\ell\}.
\]
For $\alpha\in\mathbb T_m^\ell(\mathring f)$, every entry of $\alpha_f$ is positive. Subtracting $1$ componentwise gives the bijection
\[
\alpha_f\longmapsto\beta_f:=\alpha_f-1\in\mathbb T_{m_f}^\ell,
\qquad m_f:=m-(\ell+1)=r+k-\ell.
\]
The corresponding monomials factor as
\begin{equation}\label{eq:lambdaalphabeta}
\lambda_f^{\alpha_f}=b_f\lambda_f^{\beta_f}.
\end{equation}
Thus division by $b_f$ identifies the scalar bubble space spanned by the interior Bernstein monomials with $\mathbb P_{m_f}(f)$. 
If $m_f<0$, the interior lattice is empty and the bubble space is zero. 

Let $\{\phi_\alpha:\alpha\in\mathbb T_m^d\}$ be the Lagrange basis of
$\mathbb P_m(T)$ associated with the lattice $\mathbb T_m^d$, i.e.,
\[
\phi_\alpha(x_\beta)=\delta_{\alpha\beta}, \qquad \alpha, \beta \in \mathbb T_m^d.
\]
Explicitly,
\[
\phi_\alpha(x)
=\prod_{i=0}^d\prod_{j=0}^{\alpha_i-1}
\frac{m\lambda_i(x)-j}{\alpha_i-j},
\]
with empty products equal to $1$.
For $\alpha\in\mathbb T_m^\ell(\mathring f)$, the $j=0$ factors in the product formula show that
%
$\phi_\alpha$ is divisible
by $b_f$.
Define the shifted Lagrange polynomial
\begin{equation}\label{eq:shifted-lagrange}
\phi_{\alpha,f}
:=
b_f(x_\alpha)\frac{\phi_\alpha}{b_f},
\qquad
\phi_\alpha
=
\frac{b_f}{b_f(x_\alpha)}\phi_{\alpha,f}.
\end{equation}
Equivalently, the standard product formula for the simplex Lagrange
basis gives the explicit shifted formula
\[
\phi_{\alpha,f}
=
\prod_{i\in f}
\prod_{j=1}^{\alpha_i-1}
\frac{m\lambda_i-j}{\alpha_i-j},
\]
where an empty product equals $1$. Since $\operatorname{supp}\alpha=f$, the factors with $i\notin f$ in the unshifted formula are already empty. The shift removes the $j=0$ factor $m\lambda_i/\alpha_i$ for each $i\in f$, so the products over $j$ now begin at $j=1$.

\begin{lemma}
Let $f\in\Delta_\ell(T)$ and $\alpha\in\mathbb T_m^\ell(\mathring f)$. For every $g\in\Delta_{\ell'}(T)$ containing $f$ and every $\gamma\in\mathbb T_m^{\ell'}(\mathring g)$,
\begin{equation}\label{eq:shifted-cardinality}
\phi_{\alpha,f}(x_\gamma)=\delta_{\alpha\gamma}.
\end{equation}
Consequently,
\[
\left\{\phi_{\alpha,f}|_f:\alpha\in\mathbb T_m^\ell(\mathring f)\right\}
\]
is the cardinal basis of $\mathbb P_{m_f}(f)$ at the interior lattice nodes.
\end{lemma}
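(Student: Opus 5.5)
The plan is to evaluate the explicit shifted product formula directly at lattice points in the relative interior of a face $g\supseteq f$. Recall
\[
\phi_{\alpha,f}=\prod_{i\in f}\prod_{j=1}^{\alpha_i-1}\frac{m\lambda_i-j}{\alpha_i-j},
\]
and at $x_\gamma$ we have $m\lambda_i(x_\gamma)=\gamma_i$. So
\[
\phi_{\alpha,f}(x_\gamma)=\prod_{i\in f}\prod_{j=1}^{\alpha_i-1}\frac{\gamma_i-j}{\alpha_i-j}.
\]

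The first step is to check when this product vanishes. Since $\gamma\in\mathbb T_m^{\ell'}(\mathring g)$ and $f\subseteq g$, every $\gamma_i$ with $i\in f$ is at least $1$. The $i$-th inner product vanishes exactly when $1\leq\gamma_i\leq\alpha_i-1$. Hence $\phi_{\alpha,f}(x_\gamma)\neq0$ forces $\gamma_i\geq\alpha_i$ for every $i\in f$; the case $\gamma_i=0$ is excluded precisely because $g\supseteq f$ and $x_\gamma$ is interior to $g$.

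The second step is the counting argument. Summing over $i\in f$ gives
\[
m=\sum_{i\in g}\gamma_i\geq\sum_{i\in f}\gamma_i\geq\sum_{i\in f}\alpha_i=m,
\]
so all inequalities are equalities. Then $\gamma_i=0$ for $i\in g\setminus f$, and since $\operatorname{supp}\gamma=g$, this gives $g=f$. Moreover $\gamma_i=\alpha_i$ on $f$, so $\gamma=\alpha$. When $\gamma=\alpha$, each factor equals $1$, which gives \eqref{eq:shifted-cardinality}. This counting step, together with the nonvanishing of $\gamma_i$ on $f$, is the only real point; everything else is substitution.

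For the consequence, take $g=f$. By \eqref{eq:lambdaalphabeta}, the restriction $\phi_{\alpha,f}|_f=b_f(x_\alpha)\,\phi_\alpha|_f/b_f$ has degree at most $m-(\ell+1)=m_f$ on $f$. Alternatively, the shifted product has degree $\sum_{i\in f}(\alpha_i-1)=m_f$. The interior nodes are in bijection with $\mathbb T^\ell_{m_f}$ through $\alpha\mapsto\beta_f$, so their number equals $\dim\mathbb P_{m_f}(f)$. Cardinality at these nodes gives linear independence, and the dimension count then gives a basis. The nodes $x_\alpha$ are the principal lattice of degree $m_f$ on the shrunken simplex, so they are unisolvent and the basis is cardinal. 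If $m_f<0$, both sets are empty and the statement is vacuous.
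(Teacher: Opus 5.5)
Your proof is correct, but it takes a different route from the paper. The paper never uses the explicit product formula. It evaluates the defining relation $\phi_\alpha=\frac{b_f}{b_f(x_\alpha)}\phi_{\alpha,f}$ at $x_\gamma$ and notes that $b_f(x_\gamma)=\prod_{i\in f}\gamma_i/m>0$ because $f\subseteq g=\operatorname{supp}\gamma$. It then reads off $\phi_{\alpha,f}(x_\gamma)=\delta_{\alpha\gamma}$ directly from the ordinary cardinality $\phi_\alpha(x_\gamma)=\delta_{\alpha\gamma}$. The consequence follows, as in your last paragraph, by taking $g=f$ and comparing the node count with $\dim\mathbb P_{m_f}(f)$.

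Your root analysis together with the summation $m\geq\sum_{i\in f}\gamma_i\geq\sum_{i\in f}\alpha_i=m$ in effect re-proves the unshifted Lagrange cardinality for the nodes in question. What this buys is a self-contained argument that shows concretely why the hypothesis $g\supseteq f$ is needed: a coordinate $\gamma_i=0$ with $i\in f$ would produce a nonzero factor. The paper's reduction is shorter, and more importantly it uses only the factorization through $b_f$ and scalar cardinality, not equal spacing. That same mechanism (positivity of the bubble at the evaluation node plus scalar cardinality) is what carries over to the face-unisolvent node sets of Theorem~\ref{thm:scalar-node-transfer}, where no product formula is available.

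Your closing remark about the shrunken principal lattice is true but unnecessary, since cardinality and the dimension count already give the cardinal basis.
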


\begin{proof}
Since $\operatorname{supp}\gamma=g$ and $f\subseteq g$,
\[
b_f(x_\gamma)=\prod_{i\in f}\frac{\gamma_i}{m}>0.
\]
Evaluating \eqref{eq:shifted-lagrange} at $x_\gamma$ gives
\[
\delta_{\alpha\gamma}=\phi_\alpha(x_\gamma)=\frac{b_f(x_\gamma)}{b_f(x_\alpha)}\phi_{\alpha,f}(x_\gamma).
\]
The prefactor is nonzero and equals one when $\gamma=\alpha$, which proves \eqref{eq:shifted-cardinality}. Taking $g=f$ gives cardinality on the interior nodes of $f$. Since each restriction belongs to $\mathbb P_{m_f}(f)$ and the number of nodes equals its dimension, these restrictions form a basis.
\end{proof}

The shifted extension satisfies \eqref{eq:shifted-cardinality} at nodes on every face containing $f$. On $T$, it generally differs from the fixed-degree Bernstein extension in Section~\ref{sec:bernstein-extension}. Its values on faces not containing $f$ are unrestricted; the trace properties of the Whitney--bubble factor will handle those faces.

\begin{example}[Shifted Lagrange versus Bernstein extension]\rm
Let $f=[0,1]$, $m=3$, and $\alpha=(2,1,0,\ldots,0)$. The scalar Lagrange function and its shifted quotient are
\[
\phi_\alpha=\frac{9}{2}\lambda_0\lambda_1(3\lambda_0-1),
\qquad b_f(x_\alpha)=\frac{2}{9},
\qquad \phi_{\alpha,f}=b_f(x_\alpha)\frac{\phi_\alpha}{b_f}=3\lambda_0-1.
\]
Here $m_f=1$. Since $\lambda_0+\lambda_1=1$ on $f$, the restriction of $\phi_{\alpha,f}$ is $2\lambda_0-\lambda_1$. Its degree-one Bernstein extension is therefore $2\lambda_0-\lambda_1$, whereas
\[
\phi_{\alpha,f}=(2\lambda_0-\lambda_1)-\sum_{i\notin f}\lambda_i.
\]
Both extensions agree on $f$. The extra term makes the shifted extension vanish at the interior lattice nodes of larger faces containing $f$.
\end{example}

\subsection{Pointwise duality of rooted Whitney forms}

For $e\in\Delta_k^0(f)$, we have $e(0)=f(0)$. With the other vertices in increasing order, we orient $e$ as
\[
e=[f(0),e(1),\ldots,e(k)].
\]

Recall that a $k$-form on $f$ is naturally an element of
\[
\alt^k(\mathscr T^f)
=
\left(\bigwedge^k\mathscr T^f\right)^*.
\]
For vectors $u_1,\ldots,u_k\in\mathscr T^f$, we use the canonical pairing
\begin{equation}\label{eq:kform-kvector-pairing}
\omega(u_1\wedge\cdots\wedge u_k)
:=
\omega(u_1,\ldots,u_k),
\qquad
\omega\in\alt^k(\mathscr T^f).
\end{equation}
Thus no identification between vectors and covectors is needed.

\begin{lemma}[Pointwise Whitney duality]
\label{lm:whitney-point-duality}
Let $f\in\Delta_\ell(T)$, let $x_\alpha$ be any point of
$\mathring f$, and
$e\in\Delta_k^0(f)$. Define
\begin{equation}\label{eq:whitney-dual-vector}
\tau_{\alpha,e}
:=
(\texttt{v}_{e(1)}-x_\alpha)\wedge\cdots\wedge
(\texttt{v}_{e(k)}-x_\alpha)
\in\bigwedge^k\mathscr T^f,
\end{equation}
with $\tau_{\alpha,e}:=1$ for $k=0$. Then, for
$e,e'\in\Delta_k^0(f)$,
\begin{equation}\label{eq:whitney-point-duality}
w_{e'}^f(x_\alpha)(\tau_{\alpha,e})
=
\lambda_{f(0)}(x_\alpha)\delta_{ee'}.
\end{equation}
\end{lemma}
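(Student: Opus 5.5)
We compute the covector values $\dd\lambda_i(\texttt v_j-x_\alpha)$ directly and then expand the Whitney form as a determinant. Since $\lambda_i$ is affine, $\dd\lambda_i(\texttt v_j-x_\alpha)=\lambda_i(\texttt v_j)-\lambda_i(x_\alpha)=\delta_{ij}-\lambda_i(x_\alpha)$. Traces only restrict the form to tangent vectors of $f$, and each $\texttt v_{e(j)}-x_\alpha$ is tangent to $f$. Hence we may evaluate the ambient form $w_{e'}$ at $x_\alpha$ on these vectors; $w_{e'}^f(x_\alpha)(\tau_{\alpha,e})$ equals $w_{e'}(x_\alpha)(u_1,\ldots,u_k)$ with $u_j=\texttt v_{e(j)}-x_\alpha$. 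Write $\mu_i=\lambda_i(x_\alpha)$ and $e'=[e'(0),\ldots,e'(k)]$ with $e'(0)=f(0)$.

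The cleanest route is to write the Whitney form as a single determinant. For vectors $u_1,\ldots,u_k$, we have
\[
w_{e'}(x)(u_1,\ldots,u_k)=\det\begin{pmatrix}\lambda_{e'(0)}(x)&\dd\lambda_{e'(0)}(u_1)&\cdots&\dd\lambda_{e'(0)}(u_k)\\ \vdots&\vdots&&\vdots\\ \lambda_{e'(k)}(x)&\dd\lambda_{e'(k)}(u_1)&\cdots&\dd\lambda_{e'(k)}(u_k)\end{pmatrix}.
\]
This follows by expanding along the first column and comparing with \eqref{eq:whitney}: the $(i,1)$ cofactor carries the sign $(-1)^i$, and its minor is $\dd\lambda_{e'(0)}\wedge\cdots\widehat{\ }\cdots\wedge\dd\lambda_{e'(k)}$ evaluated on $(u_1,\ldots,u_k)$. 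Substituting $u_j$, the $(i,j+1)$ entry becomes $\delta_{e'(i),e(j)}-\mu_{e'(i)}$, and the first column is $\mu_{e'(i)}$. Adding the first column to every other column leaves the determinant unchanged and gives the entries $\delta_{e'(i),e(j)}$. The matrix is then $[\,\mu_{e'(\cdot)}\mid P\,]$, where $P$ is the $(k+1)\times k$ incidence matrix of $e'$ against $e(1{:}k)$.

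Now consider the two cases. Row $0$ corresponds to $f(0)$, which never equals any $e(j)$ with $j\ge1$. So row $0$ of $P$ is zero, and expanding along that row gives $\mu_{f(0)}\det P'$, where $P'$ is the $k\times k$ incidence matrix of $e'(1{:}k)$ against $e(1{:}k)$. If $e\neq e'$, the two $k$-subsets of $f\setminus\{f(0)\}$ differ. Some $e'(i)$ then lies outside $e(1{:}k)$, so row $i$ of $P'$ is zero and $\det P'=0$. If $e=e'$, both lists are increasing in the same order, so $P'=I$. This gives $\lambda_{f(0)}(x_\alpha)\delta_{ee'}$. The case $k=0$ is immediate, since $w_{f(0)}=\lambda_{f(0)}$.

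The main point requiring care is the determinant representation of $w_{e'}$, in particular getting the signs right. A cross-check for $k=1$ is $w_{ab}(u)=\lambda_a\dd\lambda_b(u)-\lambda_b\dd\lambda_a(u)$, which matches the determinant. The same column operation also works if one prefers to expand \eqref{eq:whitney} term by term: the sum collapses because $\sum_i\mu$-contributions cancel, just as in the proof of Lemma~\ref{lm:dlambda-whitney}. The determinant form avoids that bookkeeping.
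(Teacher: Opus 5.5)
Your proof is correct and is essentially the paper's argument. The paper writes the same Whitney pairing as a $(k+1)\times(k+1)$ determinant (the transpose of yours), adds the $\lambda$-row to the others so that the entries $\delta_{j,e(s)}-\lambda_j(x_\alpha)$ become incidence entries, and expands along the root to obtain $\lambda_{f(0)}(x_\alpha)\det(\delta_{e'(t),e(s)})$. Your cofactor-sign check of the determinant representation is a detail the paper states without proof.
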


\begin{proof}
Write $e'=[f(0),e'(1),\ldots,e'(k)]$ and set
$u_s=\texttt v_{e(s)}-x_\alpha$. The affine coordinates satisfy
\[
\dd_f\lambda_j(u_s)=\delta_{j,e(s)}-\lambda_j(x_\alpha).
\]
The Whitney pairing is the determinant whose first row is
$(\lambda_j(x_\alpha))_{j\in e'}$ and whose row $s+1$ is
$(\dd_f\lambda_j(u_s))_{j\in e'}$. Add the first row to each later row.
Since no $e(s)$ is the root, expansion in the first column gives
\[
w_{e'}^f(x_\alpha)(\tau_{\alpha,e})
=\lambda_{f(0)}(x_\alpha)
\det\bigl(\delta_{e'(t),e(s)}\bigr)_{s,t=1}^k
=\lambda_{f(0)}(x_\alpha)\delta_{ee'}.
\]
Here the determinant is $1$ for the common ordered vertex set and
$0$ otherwise. For $k=0$, the empty determinant is $1$.
\end{proof}

\subsection{Closed-form cardinal basis}
We combine the shifted scalar cardinal factors with the pointwise Whitney duality to obtain a basis dual to tangential evaluations.
\begin{theorem}[Nodal Whitney--Lagrange basis]
\label{thm:nodal-whitney-lagrange}
Assume $r\geq0$ and $0\leq k\leq d$, and let
\[
m:=r+k+1.
\]
For $k\leq\ell\leq\min\{d,r+k\}$,
\[
f\in\Delta_\ell(T),\qquad
\alpha\in\mathbb T_m^\ell(\mathring f),\qquad
e\in\Delta_k^0(f),
\]
define
\begin{equation}\label{eq:nodal-trimmed-basis}
\Phi_{\alpha,e}^{f,T}
:=
\frac{
\phi_{\alpha,f}\,b_{f\setminus e}\,w_e
}{
\lambda_{f(0)}(x_\alpha)\,
b_{f\setminus e}(x_\alpha)
}.
\end{equation}
The denominator is positive because $x_\alpha\in\mathring f$, and hence
$\lambda_i(x_\alpha)>0$ for every $i\in f$.
For $k\leq\ell'\leq\min\{d,r+k\}$,
\[
g\in\Delta_{\ell'}(T),\qquad
\gamma\in\mathbb T_m^{\ell'}(\mathring g),\qquad
e'\in\Delta_k^0(g),
\]
define the face-local functional
\begin{equation}\label{eq:nodal-trimmed-dof}
\begin{aligned}
N_{\gamma,e'}^g
&\in
\bigl(\mathbb P_{r+1}^-\Lambda^k(T)\bigr)^*,\\
N_{\gamma,e'}^g(\omega)
&:=
(\tr_g\omega)(x_\gamma)
\bigl(\tau_{\gamma,e'}\bigr).
\end{aligned}
\end{equation}
Then
\begin{equation}\label{eq:global-biorthogonality}
N_{\gamma,e'}^g
\bigl(\Phi_{\alpha,e}^{f,T}\bigr)
=
\delta_{fg}\delta_{\alpha\gamma}\delta_{ee'}.
\end{equation}
Consequently,
\begin{equation}\label{eq:nodal-trimmed-basis-set}
\left\{
\Phi_{\alpha,e}^{f,T}:
\begin{array}{l}
f\in\Delta_\ell(T),\quad
k\leq\ell\leq\min\{d,r+k\},\\
\alpha\in\mathbb T_m^\ell(\mathring f),\quad
e\in\Delta_k^0(f)
\end{array}
\right\}
\end{equation}
is a basis of $\mathbb P_{r+1}^-\Lambda^k(T)$, and the functionals
\eqref{eq:nodal-trimmed-dof} form its dual basis.
\end{theorem}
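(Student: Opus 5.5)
The plan is to verify the biorthogonality relation \eqref{eq:global-biorthogonality} directly and then conclude the basis statement by a dimension count against Theorem~\ref{th:trimdec}. First, note that each $\Phi_{\alpha,e}^{f,T}$ lies in $\mathbb P_{r+1}^-\Lambda^k(T)$. Since $\phi_{\alpha,f}$ has degree $m-(\ell+1)=r+k-\ell$ and $b_{f\setminus e}$ has degree $\ell-k$, the scalar factor $\phi_{\alpha,f}b_{f\setminus e}$ lies in $\mathbb P_r(T)$, and Lemma~\ref{lm:qwf} gives membership.

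For the evaluation $N_{\gamma,e'}^g(\Phi_{\alpha,e}^{f,T})$, I would split into cases according to the relation between $f$ and $g$.

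\emph{Case $f\nsubseteq g$.} Here $\dim g\geq k$, so Lemma~\ref{lem:facebubbletrace} gives $\tr_g(b_{f\setminus e}w_e)=0$. Hence the functional vanishes, which agrees with $\delta_{fg}=0$.

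\emph{Case $f\subseteq g$.} The point $x_\gamma\in\mathring g$ satisfies $\lambda_i(x_\gamma)>0$ for $i\in g$. The shifted cardinality \eqref{eq:shifted-cardinality} gives $\phi_{\alpha,f}(x_\gamma)=\delta_{\alpha\gamma}$. If $g\neq f$, then $\gamma\neq\alpha$ because the supports differ, so the value is zero. If $g=f$, then $\gamma=\alpha$ leaves only the node $x_\alpha$. At this node,
\[
N_{\alpha,e'}^f(\Phi_{\alpha,e}^{f,T})
=\frac{b_{f\setminus e}(x_\alpha)\,w_e^f(x_\alpha)(\tau_{\alpha,e'})}{\lambda_{f(0)}(x_\alpha)\,b_{f\setminus e}(x_\alpha)},
\]
using $\tr_f w_e=w_e^f$ and the fact that scalar factors pass through the trace. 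Lemma~\ref{lm:whitney-point-duality}, with the roles of $e$ and $e'$ exchanged, gives $w_e^f(x_\alpha)(\tau_{\alpha,e'})=\lambda_{f(0)}(x_\alpha)\delta_{ee'}$. The quotient is therefore $\delta_{ee'}$.

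One point needs care in this case. Lemma~\ref{lm:whitney-point-duality} requires both $e$ and $e'$ to lie in $\Delta_k^0(f)$ with the same root $f(0)$ and the same orientation convention. Since $g=f$, this holds. The case $k=0$ is covered by the convention $\tau=1$.

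\emph{Conclusion.} Biorthogonality makes the family \eqref{eq:nodal-trimmed-basis-set} linearly independent. It then remains to count. For each $\ell$-face $f$ with $k\leq\ell\leq\min\{d,r+k\}$, there are $|\mathbb T_m^\ell(\mathring f)|=\binom{r+k}{\ell}$ nodes, since these nodes biject with $\mathbb T_{m_f}^\ell$. There are also $\binom{\ell}{k}$ admissible faces $e$. This matches \eqref{eq:dimBf-}. The ranges where $\ell>r+k$ contribute zero, so summing over faces reproduces $\dim\mathbb P_{r+1}^-\Lambda^k(T)$ exactly as in the proof of Theorem~\ref{th:trimdec}. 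Independence together with the correct count gives a basis, and biorthogonality then identifies the functionals \eqref{eq:nodal-trimmed-dof} as the dual basis.

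I do not expect a real obstacle. The step needing the most care is the case $f\subsetneq g$: the vanishing there comes from the shifted Lagrange factor and not from the Whitney factor, which is exactly why the shifted extension, rather than the Bernstein one, is used.
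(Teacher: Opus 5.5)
Your proposal is correct and follows the paper's proof essentially step by step. It uses the same ingredients in the same order: membership via the degree count and Lemma~\ref{lm:qwf}, vanishing for $f\nsubseteq g$ by Lemma~\ref{lem:facebubbletrace}, vanishing for $f\subsetneq g$ by the shifted cardinality \eqref{eq:shifted-cardinality}, the diagonal case $g=f$, $\gamma=\alpha$ by Lemma~\ref{lm:whitney-point-duality}, and the basis conclusion from the face-by-face count $\binom{r+k}{\ell}\binom{\ell}{k}$ against Theorem~\ref{th:trimdec}.
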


\begin{proof}
By~\eqref{eq:shifted-lagrange}, the scalar factor satisfies
\[
\deg\bigl(\phi_{\alpha,f}b_{f\setminus e}\bigr)
\leq(r+k-\ell)+(\ell-k)=r.
\]
Lemma~\ref{lm:qwf} therefore gives
\[
\Phi_{\alpha,e}^{f,T}
\in\mathbb P_{r+1}^-\Lambda^k(T).
\]

We prove~\eqref{eq:global-biorthogonality}. If $f\nsubseteq g$, then
Lemma~\ref{lem:facebubbletrace} gives
\[
\tr_g\bigl(b_{f\setminus e}w_e\bigr)=0, \text{ and therefore } N_{\gamma,e'}^g
\bigl(\Phi_{\alpha,e}^{f,T}\bigr)=0.
\]

Now suppose $f\subseteq g$. By~\eqref{eq:shifted-cardinality},
\[
\phi_{\alpha,f}(x_\gamma)
=
\delta_{\alpha\gamma}.
\]
Thus the pairing vanishes unless $\alpha=\gamma$. Since
$\operatorname{supp}\alpha=f$ and $\operatorname{supp}\gamma=g$, the
equality $\alpha=\gamma$ implies $f=g$. In this remaining case,
$\tr_f w_e=w_e^f$, and Lemma~\ref{lm:whitney-point-duality} yields
\[
\begin{aligned}
N_{\alpha,e'}^f
\bigl(\Phi_{\alpha,e}^{f,T}\bigr)
&=
\frac{
b_{f\setminus e}(x_\alpha)
w_e^f(x_\alpha)(\tau_{\alpha,e'})
}{
\lambda_{f(0)}(x_\alpha)
b_{f\setminus e}(x_\alpha)
} =
\delta_{ee'}.
\end{aligned}
\]
This proves~\eqref{eq:global-biorthogonality}.

For each $f\in\Delta_\ell(T)$,
\[
\#\mathbb T_m^\ell(\mathring f)
=
\binom{r+k}{\ell},
\qquad
\#\Delta_k^0(f)
=
\binom{\ell}{k}.
\]
Hence the number of basis functions associated with $f$ is
\[
\binom{r+k}{\ell}\binom{\ell}{k}
=
\dim\mathbb B_{r+1}^-\Lambda^k(T;f).
\]
Summing over all faces and using Theorem~\ref{th:trimdec} completes the proof.
\end{proof}

Although the cardinal functions use the same face indices as the
geometric decomposition, one generally has
\[
\Phi_{\alpha,e}^{f,T}
\notin
\mathbb B_{r+1}^-\Lambda^k(T;f).
\]
The face $f$ records where the nodal data lie. The shifted extension
adds corrections on larger faces containing $f$.

\begin{remark}[Scalar reduction]\rm
For $k=0$, the only rooted $0$-face of $f$ is
$e=\{f(0)\}$. Since $w_e=\lambda_{f(0)}$ and
$b_{f\setminus e}w_e=b_f$, equations~\eqref{eq:shifted-lagrange}
and~\eqref{eq:nodal-trimmed-basis} give
\[
\Phi_{\alpha,e}^{f,T}
=
\frac{\phi_{\alpha,f}b_f}{b_f(x_\alpha)}
=
\phi_\alpha.
\]
Moreover, $\tau_{\alpha,e}=1$ and
$N_{\alpha,e}^f(u)=u(x_\alpha)$. Since the degree-$(r+1)$ lattice is
the disjoint union of its nodes according to their support faces, the
basis and degrees of freedom reduce exactly to the standard
degree-$(r+1)$ Lagrange element.
\end{remark}

\begin{remark}[Scalar product formula and earlier vector bases]
\label{rem:product-comparison}\rm
Substituting~\eqref{eq:shifted-lagrange} into
\eqref{eq:nodal-trimmed-basis} and cancelling $b_f=b_e b_{f\setminus e}$ gives
\begin{equation}\label{eq:compact-cardinal}
\Phi_{\alpha,e}^{f,T}
=\left(\prod_{i\in e\setminus\{f(0)\}}\lambda_i(x_\alpha)\right)
\frac{\phi_\alpha}{b_e}\,w_e.
\end{equation}
Here $\phi_\alpha$ is the scalar degree-$m$ Lagrange function and the
quotient is a polynomial of degree at most $r$. Division means polynomial cancellation before evaluation, including at boundary points where $b_e=0$. The factored formula~\eqref{eq:nodal-trimmed-basis} avoids this division. For $d=3$, $k=1$,
and $e=[0, 1]$ rooted at $0$, this becomes
\[
\Phi_{\alpha,e}^{f,T}
=\lambda_1(x_\alpha)\frac{\phi_\alpha}{\lambda_0\lambda_1}w_{01}.
\]
The tetrahedral product bases of~\cite{GragliaWiltonPeterson1997},
as written in~\cite[(9)--(13)]{HooghiemstraHeulVuik2010},
have this same scalar--Whitney factor after cancellation, with a different
normalization. Rooted edges select independent generators at each node;
different component conventions are related by local invertible changes.
The present formula treats all form degrees by the same construction.
\end{remark}

\subsection{Transfer from other scalar node sets}

The division in~\eqref{eq:compact-cardinal} does not require equally
spaced nodes. It suffices that the scalar nodes are unisolvent on every face.

\begin{theorem}[Transfer from scalar interpolation]\label{thm:scalar-node-transfer}
Let $r\geq0$, $0\leq k\leq d$, and $m=r+k+1$.
Let $X\subset T$ be a finite set such that, for every nonempty face $F$
including $T$, evaluation is an isomorphism
\[
\mathbb P_m(F)\longrightarrow\mathbb R^{X\cap F}.
\]
Let $\{\phi_x:x\in X\}$ be the scalar cardinal basis on $T$.
For $x\in X\cap\mathring f$, with $\dim f\geq k$, choose a root in $f$ and label it $0$ locally.
For each $k$-face $e=[0,e(1),\ldots,e(k)]\subseteq f$, set
\begin{equation}\label{eq:scalar-node-transfer}
\begin{aligned}
\Phi_{x,e}
&=\left(\prod_{j=1}^k\lambda_{e(j)}(x)\right)
\frac{\phi_x}{b_e}\,w_e,\\
N_{x,e}(\omega)
&=(\tr_f\omega)(x)
\bigl((\texttt v_{e(1)}-x)\wedge\cdots\wedge(\texttt v_{e(k)}-x)\bigr).
\end{aligned}
\end{equation}
The quotient is a polynomial. These functions form a basis of
$\mathbb P_{r+1}^-\Lambda^k(T)$ dual to the displayed functionals.
The root and its local labeling may be chosen separately at each node. Empty products and
wedges equal $1$.
\end{theorem}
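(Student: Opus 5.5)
The argument will mirror the proof of Theorem~\ref{thm:nodal-whitney-lagrange}. The principal lattice is replaced by $X$, and the product-formula facts are replaced by consequences of face unisolvence. There are three steps: (i) show that $b_e$ divides $\phi_x$; (ii) verify biorthogonality $N_{y,e'}(\Phi_{x,e})=\delta_{xy}\delta_{ee'}$; (iii) count dimensions via Theorem~\ref{th:trimdec}.

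For (i), let $x\in X\cap\mathring f$ and $e\subseteq f$. Take any vertex $i\in f$ and the facet $F_i$. Since $x\notin F_i$, the restriction $\phi_x|_{F_i}\in\mathbb P_m(F_i)$ vanishes on all of $X\cap F_i$, so unisolvence on $F_i$ gives $\phi_x|_{F_i}=0$. Hence $\lambda_i\mid\phi_x$. The $\lambda_i$ for distinct $i$ are pairwise coprime irreducible linear forms, so $b_f$ divides $\phi_x$, and in particular $b_e$ does. Thus $\phi_x/b_e$ is a polynomial of degree $\le m-k-1=r$, and Lemma~\ref{lm:qwf} puts $\Phi_{x,e}$ in $\mathbb P_{r+1}^-\Lambda^k(T)$.

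It is convenient to write $\Phi_{x,e}=\psi_x\,b_{f\setminus e}w_e/\bigl(\lambda_0(x)b_{f\setminus e}(x)\bigr)$, where $\psi_x:=b_f(x)\phi_x/b_f$. This matches the factored form \eqref{eq:nodal-trimmed-basis}, because the prefactor $\prod_j\lambda_{e(j)}(x)$ equals $b_f(x)/\bigl(\lambda_0(x)b_{f\setminus e}(x)\bigr)$.

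For (ii), take $y\in X\cap\mathring g$ and $e'\in\Delta_k^0(g)$, with the root chosen at $y$. If $f\nsubseteq g$, then Lemma~\ref{lem:facebubbletrace} kills $\tr_g(b_{f\setminus e}w_e)$. If $f\subseteq g$, then $b_f(y)>0$, because every $\lambda_i$ with $i\in f\subseteq g$ is positive on $\mathring g$. Therefore $\psi_x(y)=b_f(x)\phi_x(y)/b_f(y)=\delta_{xy}$, and the pairing vanishes unless $y=x$, which forces $g=f$. In that case $\tr_f w_e=w_e^f$. Lemma~\ref{lm:whitney-point-duality} applies at the arbitrary interior point $x$ and with the root chosen at $x$, and it gives the value $\lambda_0(x)\delta_{ee'}\,b_{f\setminus e}(x)/\bigl(\lambda_0(x)b_{f\setminus e}(x)\bigr)=\delta_{ee'}$. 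Because the whole computation is local to the single node $x$, the root may vary from node to node, as the statement allows.

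For (iii), we need $\#(X\cap\mathring f)=\dim\mathbb P_{m_f}(f)=\binom{r+k}{\ell}$ for $f\in\Delta_\ell(T)$. I expect this counting step to be the main obstacle. Inclusion--exclusion over the faces of $f$, using $\#(X\cap F)=\dim\mathbb P_m(F)$ for every face $F$, shows that $\#(X\cap\mathring f)$ equals the alternating sum $\sum_{F\subseteq f}(-1)^{\ell-\dim F}\binom{m+\dim F}{\dim F}$. The principal lattice satisfies the same face counts, so this alternating sum equals $\#\mathbb T_m^\ell(\mathring f)=\binom{m-1}{\ell}=\binom{r+k}{\ell}$. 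For $\ell>r+k$ the count is zero; faces with $\dim f<k$ carry no functionals. Multiplying by $\binom{\ell}{k}$ rooted faces and summing over faces gives $\dim\mathbb P_{r+1}^-\Lambda^k(T)$, by the dimension count in the proof of Theorem~\ref{th:trimdec}. Biorthogonality then yields linear independence, hence a basis with the functionals as its dual basis.
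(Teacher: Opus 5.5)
Your proposal is correct and follows essentially the same route as the paper. Facet unisolvence gives $b_f\mid\phi_x$. Biorthogonality then follows from Lemma~\ref{lem:facebubbletrace} when $f\nsubseteq g$, from positivity of $b_f$ on $\mathring g$ together with scalar cardinality when $f\subseteq g$, and from Lemma~\ref{lm:whitney-point-duality} at the node itself. The interior node counts are matched to the principal lattice through the closed-face counts before you invoke the dimension identity from Theorem~\ref{th:trimdec}, exactly as the paper does.

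Your rewriting through $\psi_x=b_f(x)\phi_x/b_f$ is only a cosmetic repackaging of the paper's direct use of $\phi_x/b_e$. The one thing you omit is the trivial case $d=0$, which the paper treats separately: there are no facets $F_i$ there, but $b_T=\lambda_0=1$ makes the divisibility immediate.
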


\begin{proof}
Assume first $d\geq1$. If $x\in\mathring f$ and $i\in f$, then
$x\notin F_i$. The scalar cardinal function $\phi_x$ vanishes on
$X\cap F_i$, so face unisolvence implies $\phi_x|_{F_i}=0$.
Hence $\lambda_i$ divides $\phi_x$. These different linear factors are
relatively prime, so $b_f$ divides $\phi_x$. Since $e\subseteq f$,
$\phi_x/b_e$ is a polynomial of degree at most $m-k-1=r$.
Lemma~\ref{lm:qwf} proves membership in the trimmed space.

Pair $\Phi_{x,e}$ with a functional at $y\in X\cap\mathring g$.
If $f\nsubseteq g$, choose $i\in f\setminus g$.
When $i\in f\setminus e$, the quotient retains the factor $\lambda_i$;
when $i\in e$, the Whitney form has zero trace on $g$.
Thus the pairing is zero. If $f\subseteq g$, then $b_e(y)>0$,
and scalar cardinality makes the pairing zero unless $x=y$.
In that case $f=g$, and Lemma~\ref{lm:whitney-point-duality}, with root
$0$, gives
\[
w_e^f(x)\bigl((\texttt v_{e'(1)}-x)\wedge\cdots\wedge
(\texttt v_{e'(k)}-x)\bigr)=\lambda_0(x)\delta_{ee'}.
\]
Since $b_e(x)=\lambda_0(x)\prod_{j=1}^k\lambda_{e(j)}(x)$, the
normalization in~\eqref{eq:scalar-node-transfer} gives $\delta_{ee'}$.

The facewise isomorphisms imply $\#(X\cap F)=\binom{m+\dim F}{\dim F}$.
Subtracting the counts on proper faces recursively gives
$\#(X\cap\mathring f)=\binom{m-1}{\dim f}$: the principal lattice
has the same closed-face counts and hence the same interior counts.
Each such node supplies $\binom{\dim f}{k}$ functions.
The dimension identity in the proof of Theorem~\ref{th:trimdec}
therefore completes the argument. If $d=0$, necessarily $k=0$,
$X=T$, and $\phi_x=\lambda_0=1$, so the assertion is immediate.
\end{proof}

Only nodes on faces of dimension at least $k$ are used for the form
element. Unisolvence on $T$ alone is insufficient: the face hypotheses
ensure the polynomial divisions. For nonuniform nodes, the scalar
cardinal functions may require a separate construction or linear solve.
The theorem requires no additional form-valued solve and makes no claim about the interpolation norm. On a mesh, the scalar node sets on shared faces
must agree. For componentwise identification of the degrees of freedom,
a common root and orientation convention may be fixed on each shared
face; different rooted systems are otherwise related by an invertible
local change of components.

\subsection{Interpolation and conformity}

Let $\mathcal T_h$ be a conforming simplicial triangulation of a
polyhedral Lipschitz domain $\Omega$.
Recall that $H\Lambda^k(\Omega)$ consists of square-integrable $k$-forms whose weak exterior derivative is square-integrable.
The functional $N_{\alpha,e}^f$ depends only on the oriented subsimplex $f$,
the point $x_\alpha$, and the rooted face $e$, not on the ambient element.
After fixing orientations and vertex orderings globally, neighboring
elements use identical functionals on a common face. For
$0\leq k\leq d-1$, let $F$ be a common facet.
Theorem~\ref{thm:nodal-whitney-lagrange}, applied intrinsically on $F$,
shows that the functionals indexed by subfaces $f\subseteq F$ are
unisolvent for $\mathbb P_{r+1}^-\Lambda^k(F)$. By~\eqref{eq:trimmed-face-trace}, both element traces lie in this space. Hence, equality of the
shared values is equivalent to equality of the tangential traces on
$F$. For $k=d$, no interelement trace matching is required. Let $V_h^k$ be the assembled piecewise trimmed space with matching shared degrees of freedom. The standard characterization of $H\Lambda^k$ conformity by matching facet traces~\cite[Lemma~2.2]{ArnoldFalkWinther2009} gives
\[
V_h^k
=
\{\omega_h\in H\Lambda^k(\Omega):
\omega_h|_T\in\mathbb P_{r+1}^-\Lambda^k(T)
\text{ for all }T\in\mathcal T_h\}.
\]
Point evaluation is generally not bounded on $H\Lambda^k$. For
$\omega\in C^0\Lambda^k(\overline T)$, define the nodal interpolation
operator
\begin{equation}\label{eq:nodal-interpolant}
I_{r+1,T}^{-,k}\omega
:=
\sum_{\ell=k}^{\min\{d,r+k\}}
\sum_{f\in\Delta_\ell(T)}
\sum_{\alpha\in\mathbb T_m^\ell(\mathring f)}
\sum_{e\in\Delta_k^0(f)}
N_{\alpha,e}^f(\omega)\Phi_{\alpha,e}^{f,T}.
\end{equation}
By~\eqref{eq:global-biorthogonality},
\[
I_{r+1,T}^{-,k}p=p,
\qquad
p\in\mathbb P_{r+1}^-\Lambda^k(T).
\]
Thus $I_{r+1,T}^{-,k}$ is a polynomial projector on continuous forms.
Its compatibility with face restriction is established next; the exterior
derivative is discussed separately below.

\begin{proposition}[Root independence and face restriction]
\label{prop:root-trace}
At each active node $x_\alpha\in\mathring f$, the interpolation
conditions are equivalent to
\[
(\tr_f I_{r+1,T}^{-,k}\omega)(x_\alpha)
=(\tr_f\omega)(x_\alpha)
\quad\text{in }\operatorname{Alt}^k(\mathscr T^f).
\]
Thus the interpolant is independent of the roots and vertex orderings.
For every face $F$ of dimension at least $k$,
\begin{equation}\label{eq:interpolant-trace}
\tr_F I_{r+1,T}^{-,k}\omega
=I_{r+1,F}^{-,k}(\tr_F\omega).
\end{equation}
The same statements hold for the interpolant in
Theorem~\ref{thm:scalar-node-transfer}, using $X\cap F$ on $F$.
\end{proposition}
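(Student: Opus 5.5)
The plan is to reduce everything to one linear-algebra fact at a single node, and then to use unisolvence on the face $F$.

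\emph{Step 1 (pointwise equivalence).} Fix a node $x_\alpha\in\mathring f$ with $\dim f=\ell\geq k$. Translate so that $x_\alpha$ is the origin. Then the vectors $\texttt v_{e(1)}-x_\alpha,\ldots,\texttt v_{e(k)}-x_\alpha$, taken over $e\in\Delta_k^0(f)$, are the $k$-fold wedges of the $\ell$ vectors $\texttt v_{f(i)}-x_\alpha$, $i=1,\ldots,\ell$.

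These $\ell$ vectors span $\mathscr T^f$. Indeed, $x_\alpha$ is a convex combination of the vertices with positive weight $\lambda_{f(0)}(x_\alpha)$ on the root, so $\texttt v_{f(0)}-x_\alpha$ is a linear combination of the other $\ell$ vectors. Since $\dim\mathscr T^f=\ell$, they form a basis.

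Hence $\{\tau_{\alpha,e}:e\in\Delta_k^0(f)\}$ is a basis of $\bigwedge^k\mathscr T^f$. Alternatively, Lemma~\ref{lm:whitney-point-duality} gives this directly: the pairing matrix with the forms $w_{e'}^f(x_\alpha)$ is $\lambda_{f(0)}(x_\alpha)I$, which is invertible, and the count is $\binom{\ell}{k}$.

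So the conditions $N_{\alpha,e}^f(I\omega)=N_{\alpha,e}^f(\omega)$ for all $e\in\Delta_k^0(f)$ hold if and only if $(\tr_fI\omega)(x_\alpha)$ and $(\tr_f\omega)(x_\alpha)$ agree as elements of $\alt^k(\mathscr T^f)$.

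This equivalent condition involves no root and no ordering. By biorthogonality~\eqref{eq:global-biorthogonality}, $I\omega$ is the unique element of $\mathbb P_{r+1}^-\Lambda^k(T)$ satisfying these conditions at all active nodes. Therefore the interpolant is independent of the choice of roots and orderings.

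\emph{Step 2 (face restriction).} Let $F$ be a face with $\dim F\geq k$. Both sides of~\eqref{eq:interpolant-trace} lie in $\mathbb P_{r+1}^-\Lambda^k(F)$:
\begin{itemize}
\item the left side by~\eqref{eq:trimmed-face-trace};
\item the right side by construction.
\end{itemize}

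The active nodes of $F$ are exactly the nodes $x_\alpha\in\mathring f$ with $f\subseteq F$ and $\dim f\geq k$. On the lattice side, $\mathbb T_m(F)$ with the same $m=r+k+1$ as on $T$ is the restriction of $\mathbb T_m^d$. For the transfer setting, the nodes on $F$ are $X\cap F$.

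For such $f$, transitivity of traces gives $\tr_f\tr_F=\tr_f$. Evaluating at each such node, Step 1 applied on $T$ gives
\[
(\tr_f\tr_FI\omega)(x_\alpha)=(\tr_f\omega)(x_\alpha).
\]
Step 1 applied intrinsically on $F$ gives
\[
(\tr_fI_F\tr_F\omega)(x_\alpha)=(\tr_f\tr_F\omega)(x_\alpha)=(\tr_f\omega)(x_\alpha).
\]
So the difference of the two sides of~\eqref{eq:interpolant-trace} is an element of $\mathbb P_{r+1}^-\Lambda^k(F)$ annihilated by all of $F$'s degrees of freedom. Theorem~\ref{thm:nodal-whitney-lagrange} applied on $F$ (respectively Theorem~\ref{thm:scalar-node-transfer}, whose hypotheses restrict to faces of $F$) says these functionals are unisolvent on $F$. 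Hence the difference is zero.

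\emph{Main obstacle.} The only delicate point is Step 1's spanning claim, namely that the chosen $\tau_{\alpha,e}$ form a basis of $\bigwedge^k\mathscr T^f$ rather than just being independent. I would settle it through the invertible Gram matrix from Lemma~\ref{lm:whitney-point-duality} together with the dimension count $\binom{\ell}{k}$. This also covers the transfer case, since that lemma holds at any interior point.
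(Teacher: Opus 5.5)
Your proposal is correct and follows essentially the same route as the paper. The rooted multivectors $\tau_{\alpha,e}$ form a basis of $\bigwedge^k\mathscr T^f$ by Lemma~\ref{lm:whitney-point-duality} and the count $\binom{\ell}{k}$, so the nodal conditions amount to matching the intrinsic form value; uniqueness then gives root independence, and \eqref{eq:trimmed-face-trace} together with unisolvence on $F$ gives the restriction identity. Your second spanning argument (using $\lambda_{f(0)}(x_\alpha)>0$), the identification of the nodes on $F$, and the explicit use of transitivity of traces simply spell out details the paper leaves implicit.
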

\begin{proof}
By Lemma~\ref{lm:whitney-point-duality} and the dimension count
$\dim\bigwedge^k\mathscr T^f=\binom{\dim f}{k}$, the rooted tangent
multivectors form a basis of $\bigwedge^k\mathscr T^f$ at the node.
Matching all scalar functionals is therefore exactly matching the whole
intrinsic form value. These conditions do not involve a root or ordering,
so uniqueness proves independence of those choices. By~\eqref{eq:trimmed-face-trace}, the trace on $F$
belongs to $\mathbb P_{r+1}^-\Lambda^k(F)$; it satisfies the intrinsic
nodal conditions. Unisolvence on $F$ proves~\eqref{eq:interpolant-trace}.
\end{proof}

\begin{remark}[Affine covariance]\rm
Let $F_T:\widehat T\to T$ be an affine bijection with compatible vertex
labels, and let $\widehat f$ and $\widehat e$ denote the corresponding
labeled subsimplices of $\widehat T$.
Barycentric coordinates, Whitney forms, lattice points, and the tangent
multivectors in \eqref{eq:whitney-dual-vector} transform naturally under
$F_T$.  Consequently,
\[
F_T^*\Phi_{\alpha,e}^{f,T}
=
\Phi_{\alpha,\widehat e}^{\widehat f,\widehat T},
\qquad
N_{\alpha,e}^{f}(\omega)
=
N_{\alpha,\widehat e}^{\widehat f}(F_T^*\omega).
\]
Thus the element can be generated from a reference simplex.
\end{remark}

Root independence concerns the interpolant. Individual basis functions
may mix under a change of root; they need not merely permute or change
sign. Invariant bases under vertex permutations are studied in
\cite{LichtSymmetry2024}.

\begin{remark}[The exterior derivative]\rm
Equation~\eqref{eq:interpolant-trace} is a restriction identity.
The interpolant does not in general commute with $\dd$.
On $[0,1]$, take $r=0$ and $u(x)=x^3$. Scalar interpolation uses the
endpoints and one-form interpolation uses the midpoint, so
\[
\dd I_1^{-,0}u=\dd x,\qquad
I_1^{-,1}(\dd u)=\tfrac34\,\dd x.
\]
The assembled space remains the standard conforming trimmed space.
\end{remark}

%
%
%

\subsection{Examples}
\label{sec:examples}

For $r=0$, only $k$-faces are active. Each carries one node, its barycenter $x_f$, and the basis reduces to the rescaled Whitney forms
\begin{equation}\label{eq:r0-basis}
\Phi_f=(k+1)w_f,\qquad x_f=\frac{1}{k+1}\sum_{i\in f}\texttt v_i.
\end{equation}
Figure~\ref{fig:r0-nodal-lattice} shows these nodes on a tetrahedron. Figure~\ref{fig:k1-nodal-lattice} shows the higher-order cases tabulated below.

\begin{figure}[htbp]
\centering
\begin{minipage}{0.24\textwidth}
\centering
\TetLattice{1}{0}\\[1mm]
{\small $\ell=k=0,\ m=1$\\
vertex}
\end{minipage}
\begin{minipage}{0.24\textwidth}
\centering
\TetLattice{2}{1}\\[1mm]
{\small $\ell=k=1,\ m=2$\\
edge}
\end{minipage}
\begin{minipage}{0.24\textwidth}
\centering
\TetLattice{3}{2}\\[1mm]
{\small $\ell=k=2,\ m=3$\\
face}
\end{minipage}
\begin{minipage}{0.24\textwidth}
\centering
\TetLattice{4}{3}\\[1mm]
{\small $\ell=k=3,\ m=4$\\
tetrahedron}
\end{minipage}
\caption{Lowest-order elements on a tetrahedron. Black markers indicate DoF locations; gray markers indicate auxiliary points of the common simplicial lattice. Squares mark face points, and triangles mark interior points. For clarity, only points on the front face and in the tetrahedron interior are shown.}
\label{fig:r0-nodal-lattice}
\end{figure}

\begin{figure}[htbp]
\centering
\begin{minipage}{0.30\textwidth}
\centering
\TetLattice{3}{1}\\[1mm]
{\small $r=1,\ k=1,\ m=3,\ $\\
$\ell=1,2$}
\end{minipage}
\hfill
\begin{minipage}{0.30\textwidth}
\centering
\TetLattice{4}{1}\\[1mm]
{\small $r=2,\ k=1,\ m=4,\ $\\
$\ell=1,2,3$}
\end{minipage}
\hfill
\begin{minipage}{0.30\textwidth}
\centering
\TetLattice{5}{2}\\[1mm]
{\small $r=2, \ k=2, \ m = 5$\\
$\ell = 2, 3$}
\end{minipage}
\caption{Nodal lattices on a tetrahedron for $(k,r)=(1,1)$, $(1,2)$, and $(2,2)$, from left to right. Each active node on an $\ell$-face carries $\binom{\ell}{k}$ DoFs. Marker conventions are as in Figure~\ref{fig:r0-nodal-lattice}.}
\label{fig:k1-nodal-lattice}
\end{figure}

Tables~\ref{tab:cardinal-triangles} and \ref{tab:cardinal-volume} collect three-dimensional one-form and two-form bases, respectively, with separate blocks for $r=1,2$. Each block includes all active face dimensions and the total dimension of the element space.

Each row uses a representative face $f=[0,\ldots,\ell]$ rooted at $0$. The entries of $\alpha$ follow this vertex order, with indices outside $f$ equal to zero and $\lambda_i(x_\alpha)=\alpha_i/m$, where $m=r+k+1$. The tables use $T=[0,1,2,3]$. Other faces follow by relabeling with the corresponding orientation. We abbreviate $w_{[i,j,\ldots]}$ as $w_{ij\cdots}$. Parentheses list separate basis functions at the same node; a scalar factor outside multiplies every entry.

\begin{table}[!htbp]
\centering
\small
\setlength{\tabcolsep}{4pt}
\renewcommand{\arraystretch}{1.35}
\resizebox{\textwidth}{!}{%
\begin{tabular}{cccll}
\toprule
Parameters & $f$ & $\alpha$ & $\phi_{\alpha,f}$ & Basis functions $\Phi_{\alpha,e}^{f,T}$\\
\midrule
\multirow{3}{*}{\shortstack{$d=3$, $k=1$\\$r=1$, $m=3$\\$\dim=20$}} & $[0,1]$ & $(2,1)$ & $(3\lambda_0-1)$ & $\frac{3}{2}\phi_{\alpha,f}w_{01}$\\
 & $[0,1]$ & $(1,2)$ & $(3\lambda_1-1)$ & $3\phi_{\alpha,f}w_{01}$\\
 & $[0,1,2]$ & $(1,1,1)$ & $1$ & $(9\lambda_2w_{01},9\lambda_1w_{02})$\\
\midrule
\multirow{7}{*}{\shortstack{$d=3$, $k=1$\\$r=2$, $m=4$\\$\dim=45$}} & $[0,1]$ & $(3,1)$ & $\frac{1}{2}(4\lambda_0-1)(4\lambda_0-2)$ & $\frac{4}{3}\phi_{\alpha,f}w_{01}$\\
 & $[0,1]$ & $(2,2)$ & $(4\lambda_0-1)(4\lambda_1-1)$ & $2\phi_{\alpha,f}w_{01}$\\
 & $[0,1]$ & $(1,3)$ & $\frac{1}{2}(4\lambda_1-1)(4\lambda_1-2)$ & $4\phi_{\alpha,f}w_{01}$\\
 & $[0,1,2]$ & $(2,1,1)$ & $(4\lambda_0-1)$ & $\phi_{\alpha,f}(8\lambda_2w_{01},8\lambda_1w_{02})$\\
 & $[0,1,2]$ & $(1,2,1)$ & $(4\lambda_1-1)$ & $\phi_{\alpha,f}(16\lambda_2w_{01},8\lambda_1w_{02})$\\
 & $[0,1,2]$ & $(1,1,2)$ & $(4\lambda_2-1)$ & $\phi_{\alpha,f}(8\lambda_2w_{01},16\lambda_1w_{02})$\\
 & $T$ & $(1,1,1,1)$ & $1$ & $(64\lambda_2\lambda_3w_{01},64\lambda_1\lambda_3w_{02},64\lambda_1\lambda_2w_{03})$\\
\bottomrule
\end{tabular}}
\medskip
\caption{Three-dimensional one-form bases ($k=1$) for $r=1,2$. Each block includes all active face dimensions, with one representative edge and triangular face. For $r=1$, there are no tetrahedron-interior nodes.}
\label{tab:cardinal-triangles}
\end{table}

For implementation, \eqref{eq:nodal-trimmed-basis} reads
\[
\Phi_{\alpha,e}^{f,T}=\frac{m^{\ell-k+1}}{\alpha_{f(0)}\prod_{i\in f\setminus e}\alpha_i}\,\phi_{\alpha,f}b_{f\setminus e}w_e,
\qquad
\phi_{\alpha,f}=\prod_{i\in f}\frac{\prod_{j=1}^{\alpha_i-1}(m\lambda_i-j)}{(\alpha_i-1)!}.
\]
The tables display $\phi_{\alpha,f}$ separately to avoid repeating its factors. Each basis function is dual to the tangential evaluation in \eqref{eq:nodal-trimmed-dof}.

\begin{remark}[Reading the tables for two-dimensional elements]\rm
For fixed $r$ and $k$, the formulas associated with a face are independent of the ambient dimension: their tangential traces give the intrinsic face bases. To obtain the two-dimensional one-form basis from Table~\ref{tab:cardinal-triangles}, retain the rows for $[0,1]$ and $[0,1,2]$, interpret $[0,1,2]$ as the triangle $T$, and omit the tetrahedron-interior rows. Use the barycentric coordinates and Whitney forms of the triangle, and relabel the edge rows for its three edges with consistent orientations. This gives $3\cdot2+2=8$ functions for $r=1$ and $3\cdot3+3\cdot2=15$ for $r=2$. The parameters $k,r,m$ are unchanged, while $d=2$ and the total dimensions replace those in the table. Similarly, the triangular-face rows of Table~\ref{tab:cardinal-volume} give the two-dimensional top-form bases, with $3$ and $6$ functions for $r=1$ and $r=2$, respectively.
\end{remark}

\begin{table}[!htbp]
\centering
\small
\setlength{\tabcolsep}{4pt}
\renewcommand{\arraystretch}{1.35}
\resizebox{\textwidth}{!}{%
\begin{tabular}{cccll}
\toprule
Parameters & $f$ & $\alpha$ & $\phi_{\alpha,f}$ & Basis functions $\Phi_{\alpha,e}^{f,T}$\\
\midrule
\multirow{4}{*}{\shortstack{$d=3$, $k=2$\\$r=1$, $m=4$\\$\dim=15$}} & $[0,1,2]$ & $(2,1,1)$ & $(4\lambda_0-1)$ & $2\phi_{\alpha,f}w_{012}$\\
 & $[0,1,2]$ & $(1,2,1)$ & $(4\lambda_1-1)$ & $4\phi_{\alpha,f}w_{012}$\\
 & $[0,1,2]$ & $(1,1,2)$ & $(4\lambda_2-1)$ & $4\phi_{\alpha,f}w_{012}$\\
 & $T$ & $(1,1,1,1)$ & $1$ & $(16\lambda_1w_{023},16\lambda_2w_{013},16\lambda_3w_{012})$\\
\midrule
\multirow{10}{*}{\shortstack{$d=3$, $k=2$\\$r=2$, $m=5$\\$\dim=36$}} & $[0,1,2]$ & $(3,1,1)$ & $\frac{1}{2}(5\lambda_0-1)(5\lambda_0-2)$ & $\frac{5}{3}\phi_{\alpha,f}w_{012}$\\
 & $[0,1,2]$ & $(1,3,1)$ & $\frac{1}{2}(5\lambda_1-1)(5\lambda_1-2)$ & $5\phi_{\alpha,f}w_{012}$\\
 & $[0,1,2]$ & $(1,1,3)$ & $\frac{1}{2}(5\lambda_2-1)(5\lambda_2-2)$ & $5\phi_{\alpha,f}w_{012}$\\
 & $[0,1,2]$ & $(2,2,1)$ & $(5\lambda_0-1)(5\lambda_1-1)$ & $\frac{5}{2}\phi_{\alpha,f}w_{012}$\\
 & $[0,1,2]$ & $(2,1,2)$ & $(5\lambda_0-1)(5\lambda_2-1)$ & $\frac{5}{2}\phi_{\alpha,f}w_{012}$\\
 & $[0,1,2]$ & $(1,2,2)$ & $(5\lambda_1-1)(5\lambda_2-1)$ & $5\phi_{\alpha,f}w_{012}$\\
 & $T$ & $(2,1,1,1)$ & $(5\lambda_0-1)$ & $\phi_{\alpha,f}(\frac{25}{2}\lambda_1w_{023},\frac{25}{2}\lambda_2w_{013},\frac{25}{2}\lambda_3w_{012})$\\
 & $T$ & $(1,2,1,1)$ & $(5\lambda_1-1)$ & $\phi_{\alpha,f}(\frac{25}{2}\lambda_1w_{023},25\lambda_2w_{013},25\lambda_3w_{012})$\\
 & $T$ & $(1,1,2,1)$ & $(5\lambda_2-1)$ & $\phi_{\alpha,f}(25\lambda_1w_{023},\frac{25}{2}\lambda_2w_{013},25\lambda_3w_{012})$\\
 & $T$ & $(1,1,1,2)$ & $(5\lambda_3-1)$ & $\phi_{\alpha,f}(25\lambda_1w_{023},25\lambda_2w_{013},\frac{25}{2}\lambda_3w_{012})$\\
\bottomrule
\end{tabular}}
\medskip
\caption{Three-dimensional two-form bases ($k=2$) for $r=1,2$. Each block includes triangular-face and tetrahedron-interior functions, with one representative triangular face.}
\label{tab:cardinal-volume}
\end{table}

\Needspace{5\baselineskip}
\begin{remark}[Comparison with the Arnold--Falk--Winther tables]\rm
The bases in~\cite[Tables~9.1--9.2]{ArnoldFalkWinther2009} describe the same trimmed spaces; their orders $2,3$ correspond to $r=1,2$ here. The number of functions attached to each face and the Whitney--bubble factors agree. Their scalar factors are barycentric monomials, whereas ours are shifted Lagrange polynomials normalized for the tangential evaluations in \eqref{eq:nodal-trimmed-dof}.

The change of basis can involve functions attached to containing faces. For example, on $T=[0,1,2]$ with $r=1$, their edge basis on $f=[0,1]$ is $(\lambda_0w_{01},\lambda_1w_{01})$, whereas our function at $\alpha=(2,1)$ is
\[
\Phi_{(2,1),f}^{f,T}=\frac{3}{2}(3\lambda_0-1)w_{01}=3\lambda_0w_{01}-\frac{3}{2}\lambda_1w_{01}-\frac{3}{2}\lambda_2w_{01}.
\]
The last term is a multiple of their triangle-interior basis function. It leaves the edge trace unchanged and makes the shifted factor vanish at the triangle barycenter. Thus the corresponding ambient edge-associated subspaces differ. In contrast, the triangle-interior functions for $r=1$ differ only by the factor $9$. The tables therefore give an explicit basis dual to the chosen point data within the known trimmed spaces; they do not define new element spaces.
\end{remark}

\section{Conclusion}\label{sec:conclusion}

Logarithmic barycentric coordinates connect the Whitney--bubble decomposition
to a metric whose normalized Hodge star realizes the full and trimmed
bubble dualities. The generator formulas give a direct proof of
nondegeneracy of the integral pairings. Shifted Lagrange factors then
yield explicit bases dual to tangential point evaluations in every
dimension and order, including for scalar nodes that are unisolvent on
every face. The resulting interpolant is independent of the roots and
commutes with face restriction. These formulas describe the standard
conforming trimmed spaces; stability and conditioning as the order grows
remain questions for further study.

\section*{Acknowledgments}

The authors thank the organizers and participants of the
\emph{Workshop on Finite Element Tensor Calculus}
(Tsinghua Sanya International Mathematics Forum,
Sanya, January 2026) and the thematic programme
\emph{Differential Complexes: Theory, Discretization,
and Applications}
(Erwin Schr\"odinger International Institute, Vienna, April--June 2026)
for stimulating discussions and valuable opportunities for scientific exchange.

The authors used ChatGPT (OpenAI) for language editing and to assist
in developing the details of the duality arguments and checking the
basis formulas. The authors independently verified all mathematical
statements, proofs, references, and bibliographic information and
take responsibility for the entire manuscript.


\bibliographystyle{abbrv}
\bibliography{FEEC_revised}

\end{document}